\documentclass[[a4paper,english,12pt]{amsart}
\usepackage[utf8]{inputenc}
\usepackage{amsfonts}
\usepackage{amsmath,amsthm,amssymb}
\usepackage{bbm}
\usepackage{xcolor}

\usepackage{enumerate}

\newcommand{\HH}{{\mathcal H}}
\newcommand{\DX}{{\mathcal D}^{\infty,1}}

\newcommand{\wt}[1]{\widetilde{#1}}

\pdfpagewidth\paperwidth
\pdfpageheight\paperheight
\theoremstyle{plain}

\newtheorem{theorem}{Theorem}[section]

\newtheorem{lemma}[theorem]{Lemma}

\newtheorem{definition}[theorem]{Definition}

\newtheorem{proposition}[theorem]{Proposition}

\theoremstyle{remark}

\newtheorem{remark}[theorem]{Remark}

\newcommand\supp{\operatorname{supp}}

\usepackage{color}
\usepackage[normalem]{ulem}

\definecolor{blue(ryb)}{rgb}{0.01, 0.28, 1.0}
\definecolor{blue-green}{rgb}{0.00, 0.67, 0.9}
\definecolor{darkgreen}{rgb}{0,0.6,0.2}

\newcommand{\ft}[1]{\widehat{#1}}

\newcommand{\intt}{\mbox{\rm int\,}}

\newcommand{\Om}{{\Omega}}
\newcommand{\DD}{{\mathcal D}}

\newcommand{\FF}{\mathcal{F}}

\newcommand{\Dc}{\mathcal{D}_c}

\newcommand{\MM}{\mathcal{M}}
\newcommand{\OO}{\mathcal{O}}

\newcommand{\ve}{\varepsilon}

\newcommand{\ZZ}{\mathbb Z}

\newcommand{\KK}{\mathbb K}

\newcommand{\RR}{\mathbb R}
\newcommand{\NN}{\mathbb N}
\newcommand{\CC}{\mathbb C}

\newcommand{\oA}{\overline{A}}

\usepackage{babel}
\def\inv{\mathrm{inv}}
\def\cone{\mathrm{cone}}

\def\veps{\varepsilon}
 \def\dd{\mathrm{d}}
 \def\haar{\lambda}
 \def\car{\mathbf{1}}

\newcommand{\si}{\sigma}
\newcommand{\rh}{\rho}

\def\one{\mathbf{1}}

\begin{document}

\title[Duality for Delsarte's extremal problem on LCA groups]{Duality for Delsarte's extremal problem on locally compact Abelian groups}

\author[Berdysheva et al.]{Elena E. Berdysheva, B\'alint Farkas, Marcell Ga\'al, Mita D. Ramabulana, Szil\'ard Gy. R\'ev\'esz}


\address{Elena E. Berdysheva
\newline  \indent University of Cape Town,
\newline  \indent South Africa}

\email{elena.berdysheva@uct.ac.za}

\address{B\'alint Farkas
\newline  \indent University of Wuppertal
\newline  \indent Gau{\ss}stra{\ss}e 20, 42119 Wuppertal, Germany}

\email{farkas@math.uni-wuppertal.de}

\address{Mita D. Ramabulana
\newline  \indent University of Cape Town,
\newline  \indent South Africa}

\email{mita.ramabulana@uct.ac.za}

\address{Szil\'ard Gy. R\'evesz
\newline  \indent HUN-REN R\'enyi Institute of Mathematics,
\newline \indent Budapest, Re\'altanoda utca 13-15,1053 HUNGARY}

\email{revesz.szilard@renyi.hu}

\keywords{Delsarte's extremal problem, locally compact Abelian groups,  functional analysis, amalgam spaces, strong duality in infinite dimensional linear programming, dual cone intersection formula, Wiener's condition.}

\subjclass[2020]{Primary:  46N10. Secondary:  43A35, 46E15, 43A05, 43A25, 90C47, 46A20.}
\begin{abstract}
The Delsarte extremal problem for positive definite functions, originally introduced by Delsarte in coding theory to bound the size of error-correcting codes, has since found applications in diverse areas such as sphere packing, Fuglede's spectral set conjecture, and $1$-avoiding sets.

Recent developments have established the existence of extremizers in fairly general settings and identified precise linear programming dual formulations, together with strong duality results, in several important cases including finite groups and $\mathbb{R}^d$.

In this paper, we consider a generalized Delsarte problem on locally compact Abelian  groups, providing a natural framework for harmonic analysis. We pursue a purely functional analytic approach, which allows us to extend both the normalization and the objective functional to encompass a wide range of previously studied cases, while avoiding restrictive topological assumptions common in the literature.

Within this general setting, we derive the corresponding dual problem and prove a strong duality theorem, thereby unifying and extending earlier results. Naturally, our proof uses harmonic analysis, but the key is a functional analytic ingredient which distinguishes our proof from existing methods.
\end{abstract}
\maketitle

\section{Introduction}

The Delsarte extremal problem originates from coding theory, where the maximal possible size of codes of given length and self-correcting at a prescribed extent were estimated via their Krawtchouk polynomial expansions. See the works of Delsarte \cite{Del, Del3}, or the  survey \cite{Boy} by Boyvalenkov, Dodunekov, Musin. It turned out quickly that the approach can be reconfigured to more common setups, where non-negativity of Krawtchuk polynomial expansion coefficients corresponded to non-negativity of the Fourier transform---that is, positive definiteness. For early development of the idea we refer to Levenshtein \cite{Levenshtein}, Kabatyanskii, Levenshtein \cite{KabLev}, Ivanov \cite{Ivanov:all_h}, Yudin \cite{Yudin},  Gorbachev \cite{Gorbachev, gorbachev:ball}, Cohn, Elkies \cite{cohn:packings}.

The Delsarte extremal problem on positive definite functions for various sets in $\RR^d$ received much attention because in many instances it provides the best available tool---sometimes even a sharp tool---to access difficult problems. This was seen in case of  sphere packings, see Levenshtein \cite{Levenshtein}, Yudin \cite{Yudin}, Viazovska \cite{Viazovska}, Cohn \cite{Cohn}, Cohn, Kumar, Miller, Radchenko, Viazovska  \cite{CKMRV}, Gorbachev \cite{Gorbachev}, and in the case of the Fuglede Conjecture, too, see Lev, Matolcsi \cite{Mat-Lev}. Let us mention here also the Erd\H os-Moser distance avoiding set conjecture, see  Croft \cite{Croft1967} and Erd\H os \cite{Erdos1985}, which has seen a long development until it was proved recently in \cite{ambrus_etal} by Ambrus, Csisz\'arik, Matolcsi, Varga, and Zs\'amboki.

The Delsarte scheme has already been extended to locally compact Abelian (LCA) groups\footnote{Including the Hausdorff property.}. For example, estimates of the upper density of the translational vectors $T$ in a packing $T+\Omega$ by translates of a given set $\Omega$ are given by the Delsarte bound in LCA groups, see Berdysheva, R\'ev\'esz \cite{Berdysheva}.

The Delsarte extremal problem can be formulated as an extremal problem on a function class $\FF$. However, its formulation seems to depend on the choice of the function class, and it is far from obvious which class to consider. To be more precise, in the classical problem of sphere packing in $\RR^d$, Gorbachev in \cite{Gorbachev} worked with band-limited functions, Viazovska in \cite{Viazovska}, and again Cohn, Kumar, Miller, Radchenko, Viazovska in \cite{CKMRV} used classes of functions which, along with their Fourier transform, decay sufficiently fast (with a polynomial speed), whereas Cohn and Elkies \cite{cohn:packings} and Cohn, de Laat, Salmon \cite {Cohn-Laat-Salmon} considered the Schwartz class, etc.. The non-trivial equivalence of these, \emph{a priori} different, extremal problems is worked out by Berdysheva and R\'ev\'esz in \cite{Berdysheva} relying on useful personal communications from Gorbachev. The general definition can be formulated as follows.

\begin{definition}\label{def:Delsarte} Let $G$ be an LCA group with neutral element $0$ and Haar measure $\haar$. Denote the family of continuous positive definite functions by $\DD=\DD(G)$. Let $\FF$ be a function class, forming a subspace of $C(G;\RR)$ (real-valued continuous functions on $G$) and let $\Omega \subseteq  G$ be a symmetric set with $0\in\intt \Om$, having compact closure.

Then the \textbf{Delsarte constant} is\footnote{We remark that under the condition of $\Omega$ having compact closure, functions that are admissible for the supremum in question are automatically  integrable.}.
\begin{equation}\label{Delsarteconst}
D_G(\FF, \Omega):= \sup \left\{ \int_G f\dd \haar : f\in \DD \cap \FF, f(0)=1, f|_{G\setminus \Omega} \le 0\right\}.
\end{equation}
\end{definition}
In fact, in this work we shall generalize this setup to the respective extremal problems with both the Haar integral  (i.e., the goal functional) and the point evaluation at $0$ (i.e., normalizing functional) replaced by other almost arbitrary linear functionals.  Some  normalization for the considered functions is necessary: fully dropping any normalization constraints leaves us with a full convex cone $\FF \cap \DD$, where the goal functional $\rh $ can become unbounded.
 However, note that regarding the underlying set $\Omega$ the formulation is quite general. Indeed, we require no topological conditions on $\Omega$ apart from $0 \in \intt \Omega$, which is necessary, as otherwise the normalization condition on $f(0)=1$ trivialize the function set to $\emptyset$. Symmetry is not a restriction either, as we can always consider $\Omega \cap (-\Omega)$ instead of the original (possibly not symmetric) set, given that any positive definite function $f \in \DD$, non-positive outside $\Omega$, non-positive outside $-\Omega$, too. Finally, compact closure can be relaxed, too, as we will discuss later, but some type of boundedness is necessary, if we want $D_G(\FF,\Omega)$ to remain finite.

Obviously, the extremal problem of Delsarte is an extremal problem of infinite dimensional linear programming type, where the linear conditions bounding the feasibility set can be expressed in several ways. One direct, but rarely used variant is to say that positive definiteness is encoded in the usual inequalities
\begin{equation}\label{eqposdef2}
\sum_{k=1}^n\sum_{\ell=1}^n c_k \overline{c_\ell} f(x_k-x_\ell) \ge 0
\end{equation}
holding for all $n \in \NN,\:(c_k)_{k=1}^n \in \CC^n$, where it is important that the coefficients $c_k$ are complex. 

In fact, here is a hidden difficulty in front of us. We are doing real linear programming duality, therefore we restrict to real-valued positive definite functions, and this class is slightly different from standard positive definite functions. This issue was properly addressed in Ga\'al, R\'ev\'esz \cite{Zeitschrift}, and we will follow that work.

A second possibility is to use the non-negativity of the Fourier transform $\ft{f}$ of $f$, which is equivalent to positive definiteness whenever $\FF \subseteq  L^1(G;\RR)$, too\footnote{We use the following convention throughout: For spaces of functions over $G$ we explicitly denote the codomain, which is here either $\RR$ or $\CC$. Such precision  is important, when we consider real-valued positive definite functions.}. Nevertheless, we need to see here that our class restriction gives the same extremal values as other usual class definitions. Now, general linear programming setup requires an image space $Z$, and then we can say that the conditioning in $D_G(\FF,\Omega)$ consists of two parts, one in $X$ (the inequalities $f(x)\le 0$ on points outside $\Omega$), and another one (of non-negativity of the Fourier transform) after considering the operator of the Fourier transform $T : X \to Z$. For duality we then need to consider dual spaces $Y=X'$ and $W=Z'$, and determine the (bounded, linear) adjoint operator $T^* : W \to Y$. Then to have the strong duality result, restrictive properties of the adjoint may be needed. In fact, this ``standard linear programming approach'' was worked out first by Arestov, Babenko in \cite{AB}, and in our previous paper \cite{UFO} we followed this direction. However, we could carry over the arguments only under some compactness conditions, even if in that case in a great generality (with, e.g.,  two-sided sign conditions). Another way to look at the duality problem is to say that we do not want to leave the space $X$, so we take the operator $T$ to be the identity and change only the ``positivity cone '' to $\DD \cap X$, the cone of positive definite functions in \(X\). We are not required to deal with the individual inequalities defining the feasibility set---we need to know only the cones of positivity $P$ and $Q$ in $X$ and in its identical copy. Then the adjoint operator $T^*$ is also the identity, and we may deal with $Y=X'$ only. Even this approach is non-trivial, if we leave the realm of discrete or compact groups.


In investigations and applications of the Delsarte extremal problem,
interest was aroused regarding the linear programming dual problem of it. To establish some dual problems, which then give an upper estimate on the Delsarte constant by standard weak duality, is relatively easy (although one has to take care of the function classes here, too); see, e.g., Cohn, Elkies \cite{cohn:packings}. However, dual problems with the proof of strong duality, that is, equality of the value of the---in the linear programming terminology, ``primal''---Delsarte-type problem and the value of its dual problem, is much harder. Such results were obtained for finite Abelian groups by Matolcsi, Ruzsa in \cite{MatolcsiRuzsa}, and even earlier for the discrete group settings of $\ZZ$ by Ruzsa in \cite{Ruzsa1981} and of $\ZZ^d$ by R\'ev\'esz in \cite{R-Austral}. These results were then applied in number theory, in particular in the study of van der Corput sequences and sequences of intersectivity, by Ruzsa in \cite{Ruzsa1981}, and the distribution of Beurling primes ~\cite{Revesz-Beurling} and in an extremal problem of Landau \cite{R-Landau} by  R\'ev\'esz.

The duality result of R\'ev\'esz in \cite{R-Austral} for $\ZZ^d$ was reproduced by Virosztek \cite{virosztek} via an elegant functional analysis approach, which will be the starting point of our present approach as well. Another method to prove strong duality was developed by Arestov and Babenko in \cite{AB}. This was used to estimate the sphere kissing numbers, and is particularly relying on compactness of the underlying space. This approach was followed in our companion paper \cite{UFO} to address very general setups even in homogeneous spaces and Gelfand pairs, but also relying on the compactness assumption.

However, strong duality for $\RR^d$ resisted attempts until recently. In \cite{Cohn-Laat-Salmon} Cohn, de Laat and Salmon returned to the two decades old settings of Cohn and Elkies from \cite{cohn:packings} and proved strong duality using the Schwartz space and distributions in particular. The result was reconfirmed under a topological condition of ``continuous boundary'' by Kolountzakis, Lev, Matolcsi in \cite{KLM25}, following classical treatment of linear programming duals. However, both works relied heavily on the structure of $\RR^d$. Indeed, Cohn, de Laat, Salmon \cite{Cohn-Laat-Salmon} made heavy use of Fourier analysis in the Euclidean settings, and estimates which do not exist in general LCA groups. On the other hand, Kolountzakis, Lev, Matolcsi  \cite{KLM25} argued with the exploitation of their (otherwise, rather general) geometrical condition and along the lines of classical proofs of strong duality theorems in linear programming, referring to balls, orthogonal transformations, and the like, also missing in general groups.

Here, we generalize earlier duality results, first of all in that we extend them to general LCA groups. To make the technical part more tractable, we shall assume that the group $G$ is compactly generated, but we remark that this assumption is not needed for the validity of main result, Theorem \ref{thm:mainmain}, of this paper, see Remark \ref{rem:mainmain}. We also note that compact generation would not be a restriction either, as far as determining the Delsarte constant is concerned. In particular, it was shown by Ramabulana in \cite{ramabulanaexistence} and by Berdysheva, Ramabulana, and R\'ev\'esz in \cite{ExpoMath} that when the set of positivity has finite Haar measure, then one can restrict the Delsarte problem to a $\sigma$-compact subgroup.

We not only consider the case of general LCA groups, but  we get rid of all the topological constraints on $\Omega$ needed so far in the above mentioned various results. Furthermore, we discuss the case where the goal functional is not the Haar integral, but virtually any other linear functional, and also the normalizing constraint $f(0)=1$ is replaced by an essentially arbitrary functional $\si$. Here we need to mention that the classical normalizing condition implicitly refers to the Dirac measure at $0$, and a natural restriction to any $\si$ is that it should furnish zero value to no admitted functions---except if identically vanishing. Now we will consider some class of positive definite functions, so it is a natural requirement that $\si$ be strictly positive on the class of continuous real-valued positive definite functions $\DD$. In turn, Plancherel's theorem and Bochner's theorem together reformulate this to the famous \textbf{Wiener condition} requiring   $\ft{\si}>0$, i.e., that the Fourier transform has no zeroes on the dual group $\ft{G}$. Henceforth, we will term such a $\sigma$ \emph{strictly positive definite}.

As our main result needs even more technical introduction, let us give here the version valid for discrete Abelian groups $G$. Consider the Banach space $X=\ell^1(G;\RR)$ of real-valued  summable  functions over $G$ and take continuous linear functionals\footnote{We identify the Banach space $\ell^\infty(G;\RR)$ of real-valued  bounded  functions over $G$ with the topological dual space of $\ell^1(G;\RR)$.} $\rh,\si\in X'=\ell^\infty(G;\RR)$.
For $\Omega, \Theta \subseteq  G$ arbitrary, let us introduce the function classes
\begin{align*}
\FF_G^\si(\Omega)&:= \{ f \in \ell^1(G;\RR) ~:~ f \text{ positive definite},~ f|_{G\setminus \Omega} \le 0, \langle f,\si \rangle =1 \},
\\
\MM_G(\Theta)&:= \{ \mu \in \ell^\infty(G;\RR) : \mu=\nu-\kappa, \, \kappa\geq 0, \, \kappa|_{G\setminus\Theta}=0, \, \nu \text{ positive definite} \} .
\end{align*}

\begin{theorem}\label{thm:first}
Let $G$ be a discrete Abelian group with neutral element $0$, and let $\Omega \subseteq G$ be a symmetric set with $0\in \Omega$.  Take continuous linear functionals $\rh,\si\in \ell^\infty(G;\RR)$, where  $\si$ is assumed to be strictly positive definite
and $\rh$ is assumed to be even\footnote{For the ``primal'' problem described here the assumption that $\rh$ is even, means no loss of generality. Indeed, since a positive definite real-valued function $f\in \ell^1(G; \RR)$  is even, if  $o\in \ell^\infty(G;\RR)$ is odd, then $\langle f,o\rangle =\sum_{g\in G}f(g)o(g)=0$. So we can always consider the even part of a given $\rh\in \ell^\infty(G;\RR)$ without changing the primal problem.}.

Then to the ``primal'' linear programming extremal problem
\[
\alpha_\si^\rh (\Omega):= \inf \{\langle f,\rh\rangle~:~f \in \FF_G^\si(\Omega)\},
\]
the corresponding dual linear programming problem is
\[
\omega_\si^\rh(G\setminus\Omega):= \sup \{ s \in \RR ~:~\rh- s \si  \in \MM_G(G\setminus\Omega) \}.
\]
Moreover, there is no duality gap between the two problems.
\end{theorem}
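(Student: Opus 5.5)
We would prove Theorem~\ref{thm:first} by a cone-duality argument in the spirit of Virosztek, working in $X=\ell^1(G;\RR)$ with dual $X'=\ell^\infty(G;\RR)$. We introduce two closed convex cones in $X$:
\[
P:=\{f\in X: f \text{ positive definite}\}, \qquad N:=\{f\in X: f|_{G\setminus\Omega}\le 0\}.
\]
The feasible set is then $\FF_G^\si(\Omega)=\{f\in P\cap N:\langle f,\si\rangle=1\}$.

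Weak duality is immediate. Suppose $\rh-s\si=\nu-\kappa$ with $\nu$ positive definite and $\kappa\ge0$ supported in $G\setminus\Omega$, and let $f$ be feasible. Then $\langle f,\nu\rangle\ge0$, because a positive definite function paired with a positive definite bounded function gives $\sum_g f(g)\nu(g)\ge0$ (Bochner/Plancherel, handled for real-valued functions as in Ga\'al--R\'ev\'esz). Also $\langle f,\kappa\rangle\le0$, since $f\le0$ where $\kappa$ lives. Hence $\langle f,\rh\rangle\ge s$, and so $\omega\le\alpha$. Note that $\MM_G(G\setminus\Omega)=P^*+N^*$, where the dual cones are
\[
P^*=\{\text{positive definite } \nu\in\ell^\infty\}, \qquad N^*=\{-\kappa:\kappa\ge0,\ \supp\kappa\subseteq G\setminus\Omega\}.
\]
Identifying $P^*$ with the positive definite elements of $\ell^\infty$ is itself a Bochner-type fact.

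For strong duality, fix $s<\alpha$; we aim to show $\rh-s\si\in P^*+N^*$. Every $f\in P\cap N$ satisfies $\langle f,\rh-s\si\rangle\ge0$. Indeed, if $\langle f,\si\rangle>0$ we rescale $f$ to be feasible. If $\langle f,\si\rangle=0$, strict positive definiteness of $\si$ forces $f=0$; here we use $\ft\si>0$ and $\langle f,\si\rangle=\int\ft f\,\ft\si$ with $\ft f\ge0$. So $\rh-s\si\in(P\cap N)^*$, and it remains to prove the dual cone intersection formula
\[
(P\cap N)^*=P^*+N^*
\]
in the weak$^*$ sense. By the bipolar theorem, $(P\cap N)^*=\overline{P^*+N^*}^{w^*}$. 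The whole difficulty, and the main obstacle, is showing that $P^*+N^*$ is weak$^*$-closed, or at least that the particular element $\rh-s\si$ lies in it without taking closure.

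For this closedness step we would first try a boundedness argument combined with Banach--Alaoglu (Krein--Šmulian). Take a net $\nu_i-\kappa_i\to\mu$ weak$^*$ with $\|\nu_i-\kappa_i\|$ bounded. Because $0\in\Omega$ and $\kappa_i$ vanishes at $0$, we have $\nu_i(0)=(\nu_i-\kappa_i)(0)$, which is bounded. For positive definite $\nu$ we also have $\|\nu\|_\infty=\nu(0)$. So $(\nu_i)$ is bounded, hence $(\kappa_i)$ is bounded, and both have weak$^*$ cluster points in the respective weak$^*$-closed cones. Krein--Šmulian then gives weak$^*$ closedness of $P^*+N^*$. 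This uses exactly that evaluation at $0$ controls the norm on $P^*$, and that $N^*$ avoids $0$.

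Finally, take $s\uparrow\alpha$. If $\alpha=+\infty$, the argument gives every $s$. If $\alpha$ is finite, the set of admissible $s$ is closed by the same compactness argument, so the supremum is attained and equals $\alpha$. If the feasible set is empty we set $\alpha=+\infty$, and the argument above still applies.
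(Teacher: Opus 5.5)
Your identification $P^*=\{\text{positive definite }\nu\in\ell^\infty\}$ is false. This is a genuine gap, though an easily repaired one. Every real-valued positive definite function is even, so every odd $o\in\ell^\infty(G;\RR)$ annihilates $P$. The correct dual cone is therefore $P^*=\MM+\OO$, where $\MM$ denotes the positive definite and $\OO$ the odd elements of $\ell^\infty(G;\RR)$ (compare Proposition~\ref{prop:Pdual}). Hence $\MM_G(G\setminus\Omega)=\MM+N^*$ is in general strictly smaller than $P^*+N^*$, and your chain does not land in it:
\begin{itemize}
\item with the true $P^*$, the bound $\|\nu\|_\infty=\nu(0)$ fails on odd components, and even after adapting the closedness step you only get $\rh-s\si\in\MM+\OO+N^*$;
\item with $\MM$ in place of $P^*$, the bipolar identity $(P\cap N)^*=\overline{\MM+N^*}^{w^*}$ is false.
\end{itemize}
The symptom is that you never use the hypothesis that $\rh$ is even, and the statement fails without it. Take $G=\ZZ$, $\Omega=\{-1,0,1\}$, $\si=\mathbf{1}_{\{0\}}$ and $\rh=\mathbf{1}_{\{1\}}-\mathbf{1}_{\{-1\}}$. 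Every feasible $f$ is even, so $\alpha_\si^\rh(\Omega)=0$, and $\rh-s\si\in(P\cap N)^*$ for all $s\le 0$. But $\rh-s\si=\nu-\kappa$ with $\kappa$ vanishing on $\Omega$ would force $\nu(\pm1)=\pm1$, which is impossible for even $\nu$. So the dual value is $-\infty$.

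What is missing is the even/odd reduction that the paper carries out in the proof of Theorem~\ref{thm:main}. Consider the map $\varphi\mapsto\varphi_e:=\frac12(\varphi+\varphi\circ\inv)$ on $\ell^\infty(G;\RR)$. It is weak$^*$-continuous, being the adjoint of the same symmetrization on $\ell^1$. It maps $P^*$ into $\MM$: for $\varphi\in P^*$, $\varphi_e$ is still nonnegative on $P$, in particular on $\Re(u\star\widetilde u)$, hence on $u\star\widetilde u$ for finitely supported complex $u$, since $\Im(u\star\widetilde u)$ is odd. It also maps $N^*$ into itself, because $G\setminus\Omega$ is symmetric. Apply it to $\rh-s\si\in\overline{P^*+N^*}^{w^*}$. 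Since $\rh$ is even, and $\si$ is even as a positive definite element, this gives $\rh-s\si\in\overline{\MM+N^*}^{w^*}$. Your Banach--Alaoglu/Krein--\v{S}mulian argument, which is correct for $\MM+N^*$, then finishes the proof.

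So repaired, your proof is a genuinely different route from the paper's. The paper invokes the interior-point dual cone intersection formula (Lemma~\ref{lem:coneinter1}, with $\mathbf{1}_{\{0\}}\in Q\cap\intt P$). You instead prove weak$^*$-closedness of the sum of the dual cones directly, by controlling the norm through the value at $0$. Your route needs no interior point, which is an advantage here. Taken literally in $\ell^1(G;\RR)$, $P$ lies in the closed subspace of even functions, so it has empty interior as soon as $G$ has an element of order greater than $2$; the paper's route therefore also has to pass to even functions first. The limitation of your route is that controlling the norm of positive definite elements by a weak$^*$-continuous evaluation at $0$ is special to discrete groups. This is why the paper's general LCA argument goes through $P-Q=X$ and the Jeyakumar--Wolkowicz lemma instead.
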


The main aim of the paper is to generalize this to not necessarily compact or discrete, but only locally compact Abelian groups. The space, nowadays called \textbf{Wiener algebra}, in which we shall formulate the problem is of \emph{amalgam} type and was first considered by Wiener in \cite{Wiener2} for the special case of $G=\mathbb{R}$. For the extension to general locally compact Abelian groups we refer, e.g., to the works Phuong-C\'ac in \cite{Cac}, Argabright and Gil de Lamadrid \cite{AG-apm}, Holland \cite{Holland1975, Holland75b}, Stewart \cite{Stewart1979}, Feichtinger \cite{Feichtinger1977, Feichtinger1983}, Fournier, Stewart \cite{FournierStewart1985},  Bertrandias, Dupuis \cite{BD}.
More relevant bibliographical references will be given
in Section~\ref{sec:lcaback} below, where we collect the necessary technicalities. The formulation of our main result will be postponed to Section \ref{sec:main}, Theorem \ref{thm:mainmain}.

As said, in \cite{UFO} we could not extend our argument to non-compact groups, and in earlier works by Matolcsi, Ruzsa \cite{MatolcsiRuzsa}, and by R\'ev\'esz \cite{R-Austral} discreteness was heavily exploited. However, we should admit here that, on the other hand, both the methods of the paper~\cite{R-Austral} by R\'ev\'esz and those of \cite{UFO}---adapting the techniques of Arestov, Babenko \cite{AB}---were capable of handling two-sided sign restriction conditions, a notable example being the so-called \textbf{Turán problem}, a predecessor of Delsarte's problem actually introduced by Siegel in \cite{Siegel}. Our current method does not seem to provide an access to extremal problems having two-sided sign restrictions in general LCA groups, but otherwise overcomes all topological restrictions, and is absolutely general regarding the goal functional and the normalizing functional as well. What can be said in the case of discrete groups and two-sided sign conditions is described in Section \ref{sec:discrete}.

To preview the approach of the paper, we recall that everything depends on the choice of the function class and the abstract harmonic analysis related to it. The classes considered by Cohn, de Laat, Salmon in \cite{Cohn-Laat-Salmon}, by Kolountzakis, Lev, Matolcsi in \cite{KLM25} or, e.g., by Viazovska in \cite{Viazovska} are simply not available here. So, our approach is entirely different from  that in these works. We follow the paper \cite{Zeitschrift} by Ga\'al and R\'ev\'esz,  where, based on a lesser known version of the dual cone intersection formula, due to Jeyakumar and Wolkowicz \cite{JeyaWolf}, we devised a functional-analytic method to deal with inequalities for positive definite functions. The idea in itself goes back to the paper \cite{virosztek} by Virosztek, but he  used a different,  more familiar version of the dual cone intersection formula, requiring non-empty interior for one of the cones.

While this condition is fulfilled in the discrete setting, it badly fails in other situations. The reason is clear: In general, $\DD$ has no interior. Indeed, the many functional inequalities---e.g., $f(0)$ being the maximum of the function ---are sensitive to perturbation, and so in a neighborhood of a positive definite function there are many others refuting these properties, hence failing to be positive definite. Therefore, our argument hinges upon finding suitable function spaces and abstract harmonic analysis arguments, fitting together with the possibility to prove the respective dual cone intersection formula in our setup.

In Section \ref{sec:discrete} we explain the method of Virosztek from \cite{virosztek} and extract the main ingredients, in elementary functional-analytic lemmas. This outlines the  proof of the main result of this paper (Theorem \ref{thm:main}) and allows for a quick proof of Theorem \ref{thm:first}, and in an even more general form at that. Section \ref{sec:lcaback} starts with the recollection of some basic facts from the harmonic analysis of LCA groups and describes the function spaces in which we set up the extremal problems. It also  constitutes the identification of the dual space in question. We also discuss the cones that are relevant for us, and determine their duals. Section \ref{sec:Dual-Cone-Intersection} is devoted to the proof of the dual cone intersection formula in the particular setting presented here, which is based on the above mentioned abstract result of Jeyakumar and Wolkowicz \cite{JeyaWolf}. The main results follow in Section \ref{sec:main}, where---after the preparations in the foregoing sections---the proofs will require essentially no effort.

%
%
%
%
%
%
%
%
%
%
%

\section{The functional analysis approach and the proof of Theorem \ref{thm:first}}
\label{sec:discrete}

To illustrate our approach, in this section we present the proof of Theorem \ref{thm:first} in a much more general form.
Suppose $G$ is a discrete Abelian group. Take $X=\ell^1(G;\RR)$, and let $P:=\DD\cap X$ be the cone of positive definite functions in $X$. These are the functions $f$ whose Fourier transform $\widehat{f}$ is a positive function on the dual group $\ft{ G}$ (which is compact). We write $f \gg 0$ to denote positive definiteness. For subsets $S\subseteq G$ we use the abbreviation $S^c:=G\setminus S$.
For $A,B\subseteq G$ we define
\[
Q_{A,B}:=\{f\in X:f|_{A}\leq 0,\: f|_{B}\geq 0\},
\]
which is  a closed convex cone. We identify the dual space $X'$ with $\ell^\infty(G;\RR)$ (the space of real-valued bounded functions  endowed with the supremum norm).
Let $\si\in X' \setminus \{0\}$ be fixed and consider the closed hyperplane $\HH_\si:=\si^{-1}(\{1\})$. For $\rh\in X'$, and $\Omega_+,\Omega_-\subseteq G$ define
\[
 \alpha^\rh_\si(\Omega_+,\Omega_-) :=\inf\{\langle f,\rh\rangle:f\in P\cap Q_{\Omega_+^c,\Omega_-^c}\cap \HH_\si\}
 \]
and
\[
\omega^\rh_\si(\Omega_+^c,\Omega_-^c) :=\sup\{s\in \RR:\rh-s\si\in  P^*+Q_{\Omega_+^c,\Omega_-^c}^*\}.
\]
Here $P^*$ and $Q^*:=Q_{\Omega_+^c,\Omega_-^c}^*$ stand for the dual cones of $P$ and $Q:=Q_{\Omega_+^c,\Omega_-^c}$, respectively. Recall that for  a set $C$ in a real topological vector space $E$ with topological dual  $E'$ we write for the \emph{dual cone}
\[
C^*:=\{\varphi\in E':\langle c,\varphi\rangle\geq0\text{ for every $c\in C$}\}.
\]

\begin{theorem}\label{thm:revvir}
	Let $G$ be a discrete Abelian group, and let $\Omega_+,\Omega_-\subseteq G$ be symmetric sets with $0\in \Omega_+$. Suppose that  $\si$ is strictly positive definite and that $\rh$ is even. Then with the notation and terminology as above we have
\[
 \alpha^\rh_\si(\Omega_+,\Omega_-)=\omega^\rh_\si(\Omega_+^c,\Omega_-^c).
\]
\end{theorem}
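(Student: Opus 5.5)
Weak duality comes first and is immediate. If $\rh-s\si=\varphi+\psi$ with $\varphi\in P^*$ and $\psi\in Q^*$, then every feasible $f\in P\cap Q\cap\HH_\si$ satisfies
\[
\langle f,\rh\rangle = s\langle f,\si\rangle+\langle f,\varphi\rangle+\langle f,\psi\rangle \ge s .
\]
Hence $\omega^\rh_\si\le\alpha^\rh_\si$.

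For the reverse inequality the plan is to follow Virosztek's functional-analytic route. Set $\alpha:=\alpha^\rh_\si(\Omega_+,\Omega_-)$ and assume $\alpha>-\infty$; otherwise there is nothing to prove. If the feasible set is empty we will need a separate argument, discussed at the end. The goal is to show
\[
\rh-\alpha\si\in (P\cap Q)^*,
\]
and then to invoke the dual cone intersection formula $(P\cap Q)^*=P^*+Q^*$. Together these give $s=\alpha$ as an admissible value in the dual problem.

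The first step is the membership $\rh-\alpha\si\in(P\cap Q)^*$. Take any $f\in P\cap Q$. Since $\si$ is strictly positive definite, Wiener's condition $\ft\si>0$ and the fact that $\ft f\ge0$ on the compact group $\ft G$ give $\langle f,\si\rangle=\int_{\ft G}\ft f\,\ft\si>0$ whenever $f\neq0$. Here we use that $f$ is real and even, so the pairing reduces to the even part of $\si$. Then $f/\langle f,\si\rangle$ is feasible, so $\langle f,\rh\rangle\ge\alpha\langle f,\si\rangle$, which is the claim.

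The second step is the intersection formula, and this is where the difficulty lies. In $\ell^1(G;\RR)$ we must check that $P^*+Q^*$ is weak$^*$ closed, or else that some cone has an interior point relative to the other. The plan is to use Virosztek's version of the formula, $(P\cap Q)^*=P^*+Q^*$ whenever $P\cap\intt Q\neq\emptyset$, or its symmetric version with $\intt P$. In $\ell^1$ the cone $Q$ has empty interior, since $Q$ imposes infinitely many coordinate conditions, so the interior point has to come from $P$ instead. The candidate is $\delta_0$. It lies in $P$, and because $0\in\Omega_+$ and $\Omega_-^c\cap\{0\}$ only asks $f(0)\ge0$, it lies in $Q$ as well. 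It is an interior point of $P$: every $g$ with $\|g-\delta_0\|_1<1$ has
\[
\ft g(\chi)\ge 1-\|g-\delta_0\|_1>0 .
\]
However, elements of $P$ must be even, and the set of real even functions is a proper closed subspace, so we should work in $X_e=\ell^1_{\mathrm{even}}(G;\RR)$. There the ball argument shows $\delta_0\in\intt_{X_e}P$, and the symmetry of $\Omega_\pm$ makes $Q\cap X_e$ the correct cone. We then apply the formula in $X_e$, whose dual is $\ell^\infty_{\mathrm{even}}$, and lift back. The lifting works because $\rh$ and $\si$ may be taken even: $\rh$ by assumption, and $\si$ after replacing it by its even part, which still satisfies Wiener's condition and leaves the problem unchanged. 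We also need that even functionals in $P_e^*$ and $Q_e^*$ extend to $P^*$ and $Q^*$. Even extension does this, since for a symmetric $Q$ the dual cone description is coordinatewise.

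This interior-point and evenness bookkeeping is the main obstacle. In particular, $\delta_0\in\intt P$ uses $\si$-independent facts only, so Wiener's condition is needed solely in the first step. Finally, if the feasible set is empty, then $P\cap Q=\{0\}$ and $(P\cap Q)^*=X'$. Every $s$ is then admissible in the dual, so both values equal $+\infty$. In fact $\delta_0$ is feasible after normalisation, so this case does not arise.
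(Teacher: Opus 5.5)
Your proof is correct, and it has the same skeleton as the paper's. It uses weak duality, then the normalization argument behind Lemma~\ref{lem:2ao}, which you phrase so that it even gives $\rh-\alpha\si\in(P\cap Q)^*$; hence the dual supremum is attained at $s=\alpha$ whenever $\alpha$ is finite. It then applies the interior-point dual cone intersection formula, Lemma~\ref{lem:coneinter1}, with $\car_{\{0\}}=\delta_0$ as the interior point.

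The one real difference is where the interior is taken, and your version is the right one. The paper asserts $\car_{\{0\}}\in Q\cap\intt P$ in $\ell^1(G;\RR)$ itself. But $P$ lies in the closed subspace of even functions, which is proper as soon as $G$ has an element $g$ with $2g\neq 0$. For example, when $G=\ZZ$ we have $\delta_0+\ve\delta_1\notin P$ for every $\ve>0$. So $\intt P=\emptyset$ in those cases, and Lemma~\ref{lem:coneinter1} cannot be applied in $X$ directly.

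Your detour through $X_e$ closes this gap. There $\delta_0$ is a genuine interior point of $P$. You then lift back via the averaging projection $f\mapsto\frac12(f+f\circ\inv)$, which maps $Q$ into $Q\cap X_e$ precisely because $\Omega_\pm$ are symmetric. Evenness of $\rh-\alpha\si$ is not actually needed for the lift. Odd functionals lie in $P^*\cap(-P^*)$, so they can be absorbed into the $P^*$-summand; the same fact justifies your claim that replacing $\si$ by its even part leaves the dual problem unchanged.

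Two of your side remarks are slightly off but harmless. First, $Q$ need not have empty interior in $\ell^1$; it does not, for instance, if $\Omega_+^c$ and $\Omega_-^c$ are finite and disjoint. Second, the estimate $\ft g(\chi)\ge 1-\|g-\delta_0\|_1$ only makes sense for even $g$, as you yourself then observe.
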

In case $G=\ZZ^d$ this result (in a slightly weaker form) is due to R\'ev\'esz   \cite{R-Austral}, with an elegant alternative proof by Virosztek, see \cite{virosztek}.
In what follows we prove Theorem \ref{thm:revvir}, by the method of the latter work, extracting various abstract arguments from the proof.

Let  $E$ be a  real topological vector space with $E'$ its topological dual space, let $C\subseteq E$ be a convex cone, and let $\si,\rh\in E'$. Define the quantities
\[
 \alpha :=\inf\{\langle f,\rh\rangle:f\in C,\, \langle f,\si\rangle=1\}
  \]
  and
   \[
 \widetilde{\omega}:=\sup\{s\in \RR:\rh-s\si\in  C^*\}.
 \]
\begin{lemma}
With the notations from above we  have
 \[
\widetilde{\omega}\leq\alpha.
 \]
\end{lemma}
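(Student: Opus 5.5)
This is weak duality, and I will prove it directly from the definitions by comparing an arbitrary dual-feasible $s$ with an arbitrary primal-feasible $f$. I use the standard conventions $\sup\emptyset=-\infty$ and $\inf\emptyset=+\infty$. With these, if either feasible set is empty the inequality $\widetilde{\omega}\le\alpha$ holds trivially. So I will assume both the set $\{s\in\RR:\rh-s\si\in C^*\}$ and the set $\{f\in C:\langle f,\si\rangle=1\}$ are nonempty.

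Fix $s\in\RR$ with $\rh-s\si\in C^*$, and fix $f\in C$ with $\langle f,\si\rangle=1$. By the definition of the dual cone, $\langle f,\rh-s\si\rangle\ge 0$ because $f\in C$. By bilinearity of the pairing $E\times E'\to\RR$,
\[
\langle f,\rh\rangle - s\langle f,\si\rangle\ge 0 .
\]
Since $\langle f,\si\rangle=1$, this gives $s\le\langle f,\rh\rangle$.

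Because $f$ was an arbitrary primal-feasible element, taking the infimum over all such $f$ yields $s\le\alpha$. Because $s$ was an arbitrary dual-feasible value, taking the supremum over all such $s$ then gives $\widetilde{\omega}\le\alpha$. This argument covers every case, including $\alpha=-\infty$ (which forces the dual feasible set to be empty, already handled above) and $\widetilde{\omega}=+\infty$.

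There is no real obstacle here. The only point needing care is the handling of empty feasible sets and infinite values, which the conventions above settle. Convexity of $C$ and the topology of $E$ are not used beyond the fact that the pairing is bilinear.
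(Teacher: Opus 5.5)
Your proposal is correct and follows the paper's proof essentially verbatim: dispose of empty feasible sets via $\inf\emptyset=\infty$, $\sup\emptyset=-\infty$, then combine $\rho-s\sigma\in C^*$ with $\langle f,\sigma\rangle=1$ to get $s\le\langle f,\rho\rangle$. Passing to the infimum over $f$ and the supremum over $s$ gives $\widetilde{\omega}\le\alpha$.
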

\begin{proof}
	If either of the sets in the definition of $\alpha$ or $\widetilde{\omega}$ is empty, then the inequality holds trivially with $\inf\emptyset=\infty$ and $\sup\emptyset=-\infty$.
Let $s\in\RR$ such that $\rh-s\si\in C^{*}$, and let $f\in C$ satisfy $\langle f,\si\rangle=1$. Then we have
 \[
 0\leq \langle f,\rh-s\si\rangle =\langle f,\rh\rangle-s\langle f,\si\rangle=\langle f,\rh\rangle-s,
 \]
 so $s\leq \langle f,\rh\rangle$. It follows that $s\leq \alpha$, and thus $ \widetilde{\omega}\leq \alpha$.
\end{proof}

\begin{lemma}\label{lem:2ao}
With the notations from above, if  $\si$ is strictly positive on $C$, i.e., for $f\in C$, $f\neq0$ we have $\langle f,\si\rangle>0$,  then
we have $\alpha\leq\widetilde{\omega}$, so altogether $\alpha=\widetilde{\omega}$.
 \end{lemma}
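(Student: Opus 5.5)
The plan is to show directly that the number $\alpha$ itself is admissible in the supremum defining $\widetilde{\omega}$, using the strict positivity of $\si$ to rescale any element of $C$ onto the hyperplane $\{\langle f,\si\rangle=1\}$. Combined with the previous lemma ($\widetilde{\omega}\le\alpha$), this gives equality. I would split into cases according to whether $\alpha=-\infty$, $\alpha=+\infty$, or $\alpha\in\RR$.

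If $\alpha=-\infty$, the inequality $\alpha\le\widetilde{\omega}$ holds trivially.

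If $\alpha=+\infty$, the feasible set $\{f\in C:\langle f,\si\rangle=1\}$ is empty. I claim this forces $C=\{0\}$ (or $C=\emptyset$). Indeed, any $f\in C\setminus\{0\}$ has $\langle f,\si\rangle>0$, and since $C$ is a cone, $f/\langle f,\si\rangle$ is feasible. Then $C^*=E'$, so $\rh-s\si\in C^*$ for every $s\in\RR$, and hence $\widetilde{\omega}=+\infty=\alpha$.

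The main case is $\alpha\in\RR$, where I will verify that $\rh-\alpha\si\in C^*$. Take $f\in C$. If $f=0$, then $\langle f,\rh-\alpha\si\rangle=0$. If $f\ne0$, then $t:=\langle f,\si\rangle>0$ by hypothesis, and $g:=f/t\in C$ (cone property) satisfies $\langle g,\si\rangle=1$. Hence $\langle g,\rh\rangle\ge\alpha$ by the definition of $\alpha$. Multiplying by $t>0$ gives $\langle f,\rh\rangle-\alpha\langle f,\si\rangle\ge0$. Thus $s=\alpha$ is admissible, so $\widetilde{\omega}\ge\alpha$.

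Together with the weak duality lemma, this yields $\alpha=\widetilde{\omega}$. There is no real obstacle here. The only point needing care is the bookkeeping of the degenerate cases, in particular noting that strict positivity is exactly what makes every nonzero element of $C$ normalizable. This is also the hypothesis that will later be supplied by Wiener's condition on $\si$.
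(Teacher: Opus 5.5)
Your proof is correct and uses the same key step as the paper: strict positivity of $\si$ lets you rescale any nonzero element of $C$ onto the hyperplane $\langle f,\si\rangle=1$. The paper argues contrapositively (for $s>\widetilde{\omega}$ it normalizes a violating $f$ to get $\alpha<s$), which avoids your case split; your direct version shows $\rh-\alpha\si\in C^*$, so it also shows that the supremum defining $\widetilde{\omega}$ is attained when $\alpha$ is finite.
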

 \begin{proof}
If $\widetilde{\omega}=\infty$, then there is nothing to prove. So take  $s>\widetilde{\omega}$. It follows that $\rh-s\si\not\in C^{*}$, so there is $f\in C$ with
 \[
 0>\langle f,\rh-s\si\rangle=\langle f,\rh\rangle-s\langle f,\si\rangle,
 \]
in particular $f\neq 0$. By $f\in C$ and by the assumption on $\si$ we may suppose $\langle f,\si\rangle=1$ (positive scalar multiple). Whence we conclude
 $\langle f,\rh\rangle<s$, and  $\alpha<s$.
 Finally $\alpha\leq \widetilde{\omega}$ follows.
 \end{proof}

 The next result, cited from the literature, see, e.g., \cite[Lemma 2.1 (a)]{JeyaWolf}
 \begin{lemma}\label{lem:coneinter1}
	 Let $X$ be a real locally convex space and let $P,Q\subseteq X$ be closed convex cones.
If $0\in P\cap Q$ and the interior of one of the cones intersects the other one, then
		 \[
		 (P\cap Q)^*=P^*+Q^*.
		 \]
\end{lemma}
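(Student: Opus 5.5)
The plan is to prove the two inclusions separately. Throughout, $X$ is a real locally convex space and $P,Q\subseteq X$ are closed convex cones with $0\in P\cap Q$. We assume, say, that $x_0\in \intt P\cap Q$; the case with the roles of $P$ and $Q$ swapped is symmetric.

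The inclusion $P^*+Q^*\subseteq (P\cap Q)^*$ is immediate. If $\varphi=\varphi_1+\varphi_2$ with $\varphi_1\in P^*$ and $\varphi_2\in Q^*$, then for every $c\in P\cap Q$ we have $\langle c,\varphi\rangle=\langle c,\varphi_1\rangle+\langle c,\varphi_2\rangle\ge 0$.

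For the reverse inclusion, fix $\varphi\in(P\cap Q)^*$. We look for $\varphi_1\in P^*$ with $\varphi-\varphi_1\in Q^*$, using a separation argument in $X\times\RR$. Consider the two convex sets
\[
A:=\{(x,t)\in X\times\RR:\ x\in P,\ t>0\},\qquad B:=\{(y,t)\in X\times\RR:\ y\in Q,\ t\le -\langle y,\varphi\rangle\}.
\]
A cleaner choice is to separate $\intt P$ from the convex set $\{x\in Q:\langle x,\varphi\rangle<0\}$. These two sets are disjoint: a point of $\intt P\subseteq P$ that lies in $Q$ has $\langle x,\varphi\rangle\ge0$. Rather than separate sets, I would separate in the product space. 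Define
\[
C_1:=\{(x,r): x\in \intt P,\ r\in\RR\}\quad\text{vs.}\quad C_2:=\{(x,r): x\in Q,\ r\le\langle x,\varphi\rangle\}
\]
and apply the Hahn--Banach theorem to the open convex set $\intt P\times(0,\infty)$ against a suitably shifted cone. Concretely, I would consider the convex cone $K:=\{(x,\langle x,\varphi\rangle - s): x\in Q, s\ge 0\}$ and the open convex cone $U:=\intt P\times(-\infty,0)$. Suppose $(x,r)\in U\cap K$. Then $x\in \intt P\cap Q$ and $\langle x,\varphi\rangle -s=r<0$, so $\langle x,\varphi\rangle<0$, contradicting $\varphi\in(P\cap Q)^*$. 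Hence $U\cap K=\emptyset$.

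The geometric Hahn--Banach theorem, which applies since $U$ is open and nonempty, then gives a nonzero $(\psi,\beta)\in X'\times\RR$ with
\[
\langle u,\psi\rangle+\beta r\;<\;\langle k,\psi\rangle+\beta r'\qquad\text{for all } (u,r)\in U,\ (k,r')\in K .
\]
Because both sets are cones, the usual argument turns this into: the functional $(\psi,\beta)$ is $\le 0$ on $U$ and $\ge 0$ on $K$. Letting $r\to-\infty$ in $U$ forces $\beta\ge0$.

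The main obstacle is to rule out $\beta=0$, and this is exactly where the interior point $x_0\in Q$ is used. If $\beta=0$, then $\psi\le0$ on $\intt P$ and $\psi\ge0$ on $Q$. The point $(x_0,\langle x_0,\varphi\rangle)$ lies in $K$, so $\langle x_0,\psi\rangle\ge0$. Since $x_0\in\intt P$, for every $z$ the point $x_0+\varepsilon z$ lies in $\intt P$ for small $\varepsilon>0$, which gives $\langle x_0+\varepsilon z,\psi\rangle\le 0$. Taking $z$ and $-z$ together with $\langle x_0,\psi\rangle\ge0$ forces $\langle x_0,\psi\rangle=0$ and then $\psi(z)=0$ for all $z$. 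This contradicts $(\psi,\beta)\neq0$.

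So $\beta>0$, and after normalizing we may take $\beta=1$. On $K$, taking $s=0$, we get $\langle x,\psi\rangle+\langle x,\varphi\rangle\ge 0$ for $x\in Q$, that is, $\psi+\varphi\in Q^*$. On $U$ we get $\langle u,\psi\rangle+r\le0$ for all $r<0$, hence $\psi\le 0$ on $\intt P$. Since $P$ is a convex set with nonempty interior, $\overline{\intt P}=P$, so by continuity $-\psi\in P^*$.

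Finally write $\varphi=(-\psi)+(\varphi+\psi)\in P^*+Q^*$. This completes the reverse inclusion and hence $(P\cap Q)^*=P^*+Q^*$.
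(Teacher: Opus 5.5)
Your overall strategy is the standard Slater-type argument: separate an open cone built from $\intt P$ from a convex cone built from $Q$ and $\varphi$ in $X\times\RR$, then use $x_0\in\intt P\cap Q$ to exclude a ``vertical'' separating functional. Done correctly, this gives a self-contained proof of a result the paper does not prove but cites from Jeyakumar--Wolkowicz.

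As written, however, the key step fails because of a sign error. Your set $K=\{(x,\langle x,\varphi\rangle-s):x\in Q,\ s\ge0\}$ is the \emph{hypograph} of $\varphi$ over $Q$. From $\langle x,\varphi\rangle-s=r<0$ with $s\ge0$ you cannot conclude $\langle x,\varphi\rangle<0$. In fact $U\cap K\neq\emptyset$ always holds under the hypothesis of the lemma: $(x_0,-1)\in U\cap K$, obtained by taking $s=\langle x_0,\varphi\rangle+1$, which is $\ge 1$ because $x_0\in P\cap Q$. So no separating functional exists.

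The same problem appears later. The second coordinate on $K$ is unbounded below, so nonnegativity of $(\psi,\beta)$ on $K$ forces $\beta\le0$. Together with $\beta\ge0$ from $U$ this gives $\beta=0$, which is exactly the case your interior-point argument rules out. So the normalization $\beta=1$ is impossible with your $K$.

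The repair is a one-sign change: use the epigraph $K:=\{(x,\langle x,\varphi\rangle+s):x\in Q,\ s\ge0\}$.
\begin{itemize}
\item If $(x,r)\in U\cap K$, then $x\in\intt P\cap Q\subseteq P\cap Q$ with $\langle x,\varphi\rangle\le r<0$, contradicting $\varphi\in(P\cap Q)^*$. So $U\cap K=\emptyset$.
\item Both $U$ (letting $r\to-\infty$) and $K$ (letting $s\to\infty$) now force $\beta\ge0$, consistently.
\item Your argument with $x_0$ still excludes $\beta=0$, since it only uses $(x_0,\langle x_0,\varphi\rangle)\in K$ and $x_0\in\intt P$.
\item With $\beta=1$, the choice $s=0$ gives $\varphi+\psi\in Q^*$.
\item The set $U$ gives $\psi\le0$ on $\intt P$, hence on $\overline{\intt P}=P$.
\end{itemize}
Therefore $\varphi=(-\psi)+(\varphi+\psi)\in P^*+Q^*$.

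With this correction, and with the abandoned attempts (the sets $A$, $B$, $C_1$, $C_2$ and the aside about separating $\intt P$ from $\{x\in Q:\langle x,\varphi\rangle<0\}$) removed, your proof is complete.
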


\begin{proof}[Proof of Theorem \ref{thm:revvir}]
Now we choose in the setting of the previous lemmas $E=X=\ell^{1}(G;\RR)$, $C=P\cap Q$, where $Q=Q_{\Omega_+^c,\Omega_-^c}$, and $\si,\rh\in X'$ as in Theorem \ref{thm:revvir}.
Note that $P^*+Q^*\subseteq (P\cap Q)^*$, so in the setting of the foregoing two lemmas we trivially obtain
	   \begin{align*}
	 \widetilde{\omega}:=\sup\{s\in \RR:\rh-s\si\in  (P\cap Q)^*\}\geq \sup\{s\in \RR:\rh-s\si\in  P^*+Q^*\}=\omega_\si^\rh(\Omega_+^c,\Omega_-^c).
	 \end{align*}
By Lemma \ref{lem:2ao}, as $\si$ is assumed to be  strictly positive definite, we have $\alpha_\si^\rh(\Omega_+,\Omega_-)=\widetilde{\omega}$. We have $0\in P\cap Q$ and, by the assumption $0\in \Omega_+$, also $\car_{\{0\}}\in Q\cap \intt P$, so Lemma \ref{lem:coneinter1} applies. We conclude that
	 $\widetilde{\omega}=\omega_\si^\rh(\Omega_+^c,\Omega_-^c)$, finishing the proof.
	\end{proof}
	
	Theorem \ref{thm:first} is obtained by taking $\Omega_+=\Omega$ and $\Omega_-=G$ in Theorem \ref{thm:revvir}.

\section{Background material from the harmonic analysis on LCA groups}
\label{sec:lcaback}

Here we collect some basic material about  locally compact Abelian (LCA) groups, which will be crucial in our further analysis. We denote by $\haar:=\haar_G$ a Haar measure on such a group $G$. 
For the convolution of two functions $u,v$ on $G$ we write
\[
u\star v(x):=\int_Gu(x-y)v(y)\dd\lambda(y),
\]
whenever this expression exists for (almost) all $x\in G$.

For $\KK\in\{\RR,\CC\}$ we denote by $C(G;\KK)$ the continuous $\KK$-valued functions on $G$, and write $C_c(G;\KK)$ for the subspace of functions with compact support. For $p\in [1,\infty]$ the Lebesgue spaces of $\KK$-valued, $p$-integrable  (resp.{} essentially bounded) functions with respect to the fixed Haar measure are denoted by $L^p(G;\KK)$.

\subsection{Lattices}
We start with recalling the following  structural result.

 \begin{proposition}\label{prop:compact tile} For a compactly generated LCA group $G$ the following hold.

There is a  discrete subgroup $L$ in $G$ which is isomorphic to $\ZZ^d$ and there is a relatively compact Borel set $B\subseteq G$ such that $G/L$ is compact, $B$ tiles with complement $L$, i.e., $G=L+B$ with each $g\in G$ represented uniquely as $g=\ell+b$ with $\ell\in L$ and $b\in B$.

Moreover, the set $B$ can be chosen such that its closure $\overline{B}$ is symmetric, i.e., $\overline{B}=-\overline{B}$, and its boundary has Haar measure zero: $\haar_G(\partial B)= \haar_G(\overline{B} \setminus \intt B)=0$.
\end{proposition}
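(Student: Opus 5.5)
I will reduce the statement to the structure theorem for compactly generated LCA groups. Such a group is topologically isomorphic to $\RR^a\times\ZZ^b\times K$ with $K$ a compact Abelian group and $a,b\in\NN_0$. Through this isomorphism it suffices to construct $L$ and $B$ in the model group, since Haar measure, closures, interiors, boundaries and symmetry are all transported by topological group isomorphisms.

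\textbf{The lattice and the tile.} In $\RR^a\times\ZZ^b\times K$ I take $L:=\ZZ^a\times\ZZ^b\times\{0\}\cong\ZZ^{d}$ with $d=a+b$. This is a discrete subgroup, and the quotient $G/L\cong \mathbb{T}^a\times K$ is compact. For the tile I take the product
\[
B:=[-\tfrac12,\tfrac12)^a\times\{0\}\times K .
\]
Unique representation $g=\ell+b$ is then checked coordinatewise. In each real coordinate one writes $x=n+t$ with $n\in\ZZ$ and $t\in[-\tfrac12,\tfrac12)$, which is unique. The $\ZZ^b$-coordinate is absorbed entirely into $L$. The $K$-coordinate is carried entirely by $B$. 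The set $B$ is Borel and relatively compact, because $\overline{B}=[-\tfrac12,\tfrac12]^a\times\{0\}\times K$ is compact.

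\textbf{Symmetry and the boundary.} The closure $\overline{B}$ is symmetric, since $[-\tfrac12,\tfrac12]$, $\{0\}$ and $K$ are each symmetric. For the interior, $\{0\}$ is open in $\ZZ^b$, so
\[
\intt B\supseteq(-\tfrac12,\tfrac12)^a\times\{0\}\times K .
\]
Hence $\partial B\subseteq \bigl([-\tfrac12,\tfrac12]^a\setminus(-\tfrac12,\tfrac12)^a\bigr)\times\{0\}\times K$. The Haar measure of $G$ is a multiple of the product of Lebesgue measure on $\RR^a$, counting measure on $\ZZ^b$ and Haar measure on $K$. The boundary of the cube has Lebesgue measure zero, so $\haar_G(\partial B)=0$.

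\textbf{Main obstacle.} The only real obstacle is invoking the structure theorem correctly, namely that compactly generated LCA groups are isomorphic to $\RR^a\times\ZZ^b\times K$ (Hewitt--Ross, Theorem 9.8). The degenerate cases $a=0$ or $b=0$ need no separate treatment: $d=0$ gives $L=\{0\}$, which is allowed as $\ZZ^0$. All remaining steps are routine product computations.
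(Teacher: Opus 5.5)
Your proof is correct and takes the same route as the paper. The paper only cites the structure theorem for compactly generated LCA groups (via Rudin's Lemma 2.4.2); you make that citation explicit by pulling back $L=\ZZ^a\times\ZZ^b\times\{0\}$ and $B=[-\tfrac12,\tfrac12)^a\times\{0\}\times K$ from the model group $\RR^a\times\ZZ^b\times K$. Your tile also satisfies $0\in\intt B$, which the paper tacitly needs later in the proof of Theorem \ref{t:Ppluslemma}, where it places $W+W+W$ inside $\intt B$.
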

\begin{proof}This is directly seen from the structure theorem of compactly generated LCA groups, and is actually an ingredient of the proof of this central result; e.g. \cite[2.4.2 Lemma]{rudin}.
\end{proof}

From now on, as has been said in the introduction, we shall assume that the LCA group $G$ is compactly generated and fix a \emph{tile} $B$ and a  discrete subgroup $L$ (called a  \emph{lattice}) as provided by the previous proposition. We remark, again, that the assumption that $G$ is compactly generated is only for convenience, and is not needed for the main result, Theorem \ref{thm:mainmain}, see also Remark \ref{rem:mainmain}.

As $L$ is a discrete subgroup,  tiling $G$, there is a set of generators $\{g_1,\ldots,g_d\}$ such that each $\ell\in L$ is represented uniquely as $\ell=n_1g_1+\dots+n_d g_d$ with $n_j\in\ZZ$. With this unique representation we write $\|\ell\|_L:=\max_{j=1,\ldots,d} |n_j|$. Note that for a fixed  relatively compact $V$ also $B+V$ is relatively compact, so there is a number $m$ such that $V \cap (\overline{B}+\ell) = \emptyset$ if some coefficient $n_j$ of $\ell$ is at least as large as $m$, that is, if $\|\ell\|_L \ge m$.

Denoting the inversion by $\inv:G\to G$, $g\mapsto -g$, for a function $u:G\to \KK$ we set $\widetilde u:=\overline{u\circ \inv}$. If $u\in L^2(G)$, then the convolution $u \star \widetilde u$ defines a  continuous, positive definite function.
The following result, a variant of the existence of \textbf{Fejér's kernel}, is standard, we include its proof only for the convenience of the reader.
\begin{lemma}\label{l:posdefkernel}
	For every relatively compact set $K \subseteq  G$ and every $\ve>0$ there exists a symmetric, relatively compact Borel set $H \subseteq G$ such that for the characteristic function $\one_H$ of $H$ the function   $k:=\frac1{\haar(H)}\one_H \star \one_H$ has  the properties
\begin{enumerate}[(1)]
\item $k(0)=1$,
\item $k \ge 0$,
\item $k \in C_c(G;\RR)$,
\item $k \gg 0$,
\item $k|_K \ge 1-\ve$.
\end{enumerate}
\end{lemma}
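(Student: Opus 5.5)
The plan is to build $H$ from the structure of $G$ given by Proposition~\ref{prop:compact tile}, and to take $H$ as a large symmetric ``box'' in the lattice direction. The key identity is
\[
\one_H\star\one_H(x)=\haar(H\cap(x+H)),
\]
which holds because $H$ is symmetric, so $\one_H(x-y)=\one_{x-H}(y)=\one_{x+H}(y)$. Properties (1)--(4) are then essentially formal. First, $k(0)=\haar(H)/\haar(H)=1$ gives (1). Second, $k\ge0$ is immediate from the identity, giving (2). Third, $k$ is real-valued, and since $\one_H\in L^1\cap L^2$ its convolution with itself is continuous. It is supported in $\overline{H}+\overline{H}$, which is compact as $H$ is relatively compact, so (3) holds. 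Fourth, $\widetilde{\one_H}=\one_{-H}=\one_H$, so $k=\haar(H)^{-1}\,\one_H\star\widetilde{\one_H}$ is positive definite by the remark preceding the lemma. Equivalently, $\ft k=|\ft{\one_H}|^2/\haar(H)\ge0$, which gives (4). We also need $0<\haar(H)<\infty$; finiteness comes from relative compactness, and positivity from $H$ containing a translate of $B$, which has positive measure since $G=L+B$ and $L$ is countable.

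The substance is (5), which I will establish by a Følner-type estimate. From the identity,
\[
k(x)=\frac{\haar(H\cap(x+H))}{\haar(H)}=1-\frac{\haar(H\setminus(x+H))}{\haar(H)},
\]
so it suffices to make $\haar(H\setminus(x+H))\le\ve\,\haar(H)$ uniformly for $x\in K$. For $N\in\NN$ set $L_N:=\{\ell\in L:\|\ell\|_L\le N\}$ and
\[
H_N:=\bigcup_{\ell\in L_N}(\ell+\overline{B}).
\]
This set is symmetric because $L_N$ and $\overline{B}$ are, and it is relatively compact as a finite union. Since $\haar(\partial B)=0$, the translates $\ell+\overline B$ overlap only in null sets. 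Hence $\haar(H_N)=(2N+1)^d\haar(B)$.

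Next I use the remark after Proposition~\ref{prop:compact tile} with $V:=\overline{B}+\overline{K}$, or rather with $V:=\overline B-\overline K$, which is compact. It yields $m$ such that every $x\in K$ and $b\in\overline B$ satisfy $b-x\in\ell'+\overline B$ for some $\ell'$ with $\|\ell'\|_L<m$. Consequently, for $\ell\in L_{N-m}$ we get $\ell+\overline B\subseteq x+H_N$. Therefore $H_N\setminus(x+H_N)$ is contained in the union of the tiles indexed by $L_N\setminus L_{N-m}$, and
\[
\haar(H_N\setminus(x+H_N))\le\bigl((2N+1)^d-(2N-2m+1)^d\bigr)\haar(B).
\]
Dividing by $\haar(H_N)$ gives a quantity that tends to $0$ as $N\to\infty$, uniformly in $x\in K$. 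Choosing $N$ large then gives (5) with $H:=H_N$.

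The main obstacle is only bookkeeping: getting the inclusion $\ell+\overline B\subseteq x+H_N$ right with correct signs, and handling the measure-zero overlaps of the closed tiles. For the overlaps one could alternatively use $\intt B$ or $B$ itself. However, symmetry is only guaranteed for $\overline B$, which is why I work with closures and use $\haar(\partial B)=0$.
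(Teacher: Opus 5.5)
Your proposal is correct and follows essentially the same route as the paper. The paper also takes $H=\overline{Q_n}$, the union of the closed tiles $\overline{B}+\ell$ with $\|\ell\|_L\le n$. It then gets (5) by showing that $x-y\in H$ for every $y$ in the inner box $Q_{n-M}$, which gives $k(x)\ge (2(n-M)+1)^d/(2n+1)^d$ on $K$; this is exactly your inclusion $H_{N-m}\subseteq H_N\cap(x+H_N)$, phrased as a F{\o}lner estimate, and your explicit use of $\haar(\partial B)=0$ to get $\haar(H_N)=(2N+1)^d\haar(B)$ fills in a detail the paper leaves implicit.
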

\begin{proof}
	First, let $H$ be an arbitrary symmetric Borel set with compact closure and abbreviate $h:=\one_H$, so that $\haar(H)<\infty$,  $h \in L^2(G;\RR)$, and properties (1) and (2) follow immediately (irrespective of the choice of $H$). Also, in view of $h \in L^2(G;\RR)$, we have $h\star h \in C(G;\RR)$, with $h\star h|_{G\setminus(H+H)}=0$, so $k \in  C_c(G;\RR)$  and (3) is proved. Moreover, if $H$ is symmetric, we have $h=h\circ\inv$, so $h=\widetilde h$ (as $h$ is real-valued), and $k$ is therefore positive definite, i.e., (4) holds.

The only property to be ensured by (an appropriately large) choice of the symmetric Borel set $H$ is the last one.  We may assume that $K$ is compact, then also $K-\overline{B}$ is compact, and by discreteness of $L$ the set $L(K):=(K-B)\cap L = \{\ell \in L :  K \cap (B+ \ell ) \ne \emptyset \}$ is finite.

Let $m_K:=\max_{\ell \in L(K)} \|\ell\|_L$. We define similarly $L(B)$ and $m_B$ and put $M:=m_K+m_B$.
 Then $K \subseteq  Q_n:=\cup \{ B+\ell : \|\ell\|_L \le n\}$ for all $n \ge m_K$. We will choose $H:=\overline{Q_n}$ with some sufficiently large $n$. Obviously, together with $\overline{B}$ also $H$ is symmetric, and $H$ is a compact set as needed.

Let now $x \in K$ be arbitrary and consider
\begin{align*} h\star h (x) & = \int_G h(x-y) h(y) \dd  \lambda(y) = \int_H h(x-y) \dd \lambda(y)
\\ & \ge \int_{Q_{n-M}} \dd \lambda(y) = \sum_{\ell, \|\ell\|_L\le n-M} \int_{B+\ell} \dd y = (2(n-M)+1)^d \haar(B).
\end{align*}
Here in the step getting $\ge$ we used that if $y \in B+\ell_y$ with $\|\ell_y\|_L \le n-M$ and $x \in B+\ell_x$ with $\ell_x \in L(K)$ (and therefore $\|\ell_x\|_L\le m_K$), then $x-y \in B-B+\ell_x-\ell_y$, so that $x-y \in B+\ell$ with $\| \ell \|_L \le \|\ell_x\|_L +\|\ell_y\|_L+m_B \le n $.

On the other hand, we have $\haar(H)=(2n+1)^d \haar(B)$. It follows that for arbitrary $x \in K$ we have
\[
k(x) \ge \frac{(2(n-M)+1)^d\haar(B)}{\haar(H)}=\frac{(2(n-M)+1)^d }{(2n+1)^d} >1-\ve\quad\text{if $n$ is large enough}.
\]
Property (5), hence also the lemma, is established.
\end{proof}

 \subsection{Spaces of functions and functionals}

The space over which we shall formulate the extremal problem is a particular kind of so-called \textbf{amalgam spaces}, first considered by Wiener in his papers \cite{Wiener, Wiener2} about convergence of Fourier series, and later further developed by many authors, see, e.g., Argabright and Gil de Lamadrid \cite{AG-apm}, Holland \cite{Holland1975, Holland75b}, Stewart \cite{Stewart1979}, Feichtinger \cite{Feichtinger1977, Feichtinger1983}, Fournier, Stewart \cite{FournierStewart1985}.

The amalgam space considered here is first studied Wiener in \cite{Wiener2} for $G=\mathbb{R}$ and in the general situation by Phuong-C\'ac in \cite{Cac}, and later more or less simultaneously by  Holland \cite{Holland1975}, Stewart \cite{Stewart1979},  Bertrandias, Dupuis \cite{BD}.

 An excellent overview of amalgam spaces in context of LCA groups and their use appears in Fournier and Stewart \cite{FournierStewart1985}, where the authors additionally refer to the Mathematical Reviews article \cite{GilReview} by Gil de Lamadrid as an informative account of the historical development.

One defines
\[
X:=C^{\infty,1}(G;\RR):=\Bigl\{ f \in C(G;\RR)~:~ \|f\|_X:=\sum_{\ell \in L} \|f|_{B+\ell}\|_\infty < \infty\Bigr\}.
\]
The space\footnote{In the literature another notation is $(C,\ell^1).$} $X$ is also of ``mixed-norm'' type, and is a Banach space with the given norm.

  It is easy to see that the set $C^{\infty,1}(G;\RR)$ and the topology thereon, given by the above defined norm, is independent of the particular choice of the lattice and the tile.
Similarly, one defines the mixed-norm space $C^{\infty,1}(G;\CC)$ of complex-valued functions.

\begin{remark}
	\begin{enumerate}[(a)]
\item One has $C^{\infty,1}(G;\RR)\subseteq C(G;\RR)\cap L^\infty(G;\RR)\cap L^1(G;\RR)$.
	\item Note that $C^{\infty,1}(G;\RR)$ is not just a Banach space but even a Banach lattice with the pointwise ordering.  Moreover, the space $C_c(G;\RR)$ of compactly supported, real-valued, continuous functions is contained in $C^{\infty,1}(G;\RR)$ and dense (for this, along with some refined statements, see Proposition \ref{prop:density} below). (The analogous statements hold for the complex-valued spaces as well.)
	\item
An important feature of this setting is that for (finitely generated and) discrete groups we obtain $X=\ell^1(G;\RR)$ with equivalent norms; while for compact $G$ we have $X=C(G;\RR)$ with equivalent norms.
\end{enumerate}
\end{remark}

In the following we will denote the set of continuous (possibly complex-valued) positive definite functions as $\DD$, and set $\Dc:=\DD\cap C_c(G;\CC)$.

\begin{proposition}\label{prop:density} $C_c(G;\RR)$ is dense in $C^{\infty,1}(G;\RR)$ and also $\Dc$ is dense in $\DD\cap C^{\infty,1}(G;\RR)$.
\end{proposition}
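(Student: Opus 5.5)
Both density claims will be handled by truncation: multiply by a compactly supported cut-off function and control the error in the amalgam norm $\|\cdot\|_X$.

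\emph{First claim.} Let $f\in C^{\infty,1}(G;\RR)$ and $\ve>0$. By definition of $\|f\|_X$ there is $n$ with
\[
\sum_{\|\ell\|_L>n}\|f|_{B+\ell}\|_\infty<\ve .
\]
By Urysohn's lemma for locally compact Hausdorff spaces, pick $\varphi\in C_c(G;\RR)$ with $0\le\varphi\le1$ and $\varphi=1$ on the compact set $\overline{Q_n}$. Then $f\varphi\in C_c(G;\RR)$, and $f-f\varphi$ vanishes on $Q_n$ and is bounded by $|f|$ pointwise. Hence
\[
\|f-f\varphi\|_X\le\sum_{\|\ell\|_L>n}\|f|_{B+\ell}\|_\infty<\ve .
\]

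\emph{Second claim.} Now $f$ is in addition positive definite. Here an arbitrary cut-off would destroy positive definiteness, so I will instead multiply by the Fejér-type kernel $k$ of Lemma~\ref{l:posdefkernel}. Since $k\gg0$ and $f\gg0$, the product $fk$ is positive definite by the Schur product theorem: the matrices $[f(x_i-x_j)]$ and $[k(x_i-x_j)]$ are positive semidefinite, hence so is their Hadamard product. Moreover $fk$ is continuous with compact support, so $fk\in\Dc$. It remains to show that $\|f-fk\|_X$ is small.

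\emph{The norm estimate.} Take $K:=\overline{Q_n}$ with $n$ as above, and take $k$ with $k|_K\ge1-\ve$. Lemma~\ref{l:posdefkernel} also gives $0\le k\le1$: we have $k\ge0$ by property (2), and $k\le k(0)=1$ because $k$ is positive definite. On the tiles with $\|\ell\|_L>n$ we have $|f-fk|\le|f|$, contributing less than $\ve$. On the finitely many tiles with $\|\ell\|_L\le n$ we have $|f-fk|\le\ve|f|$, contributing at most $\ve\|f\|_X$. Altogether
\[
\|f-fk\|_X\le\ve(1+\|f\|_X).
\]

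\emph{Main obstacle.} The delicate point is the second claim: the cut-off must simultaneously stay positive definite, be nearly $1$ on a large compact set, and be bounded by $1$ everywhere. Lemma~\ref{l:posdefkernel} provides exactly these three properties, with the bound $k\le1$ coming from $|k(x)|\le k(0)$ for positive definite $k$.

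One more point needs care: whether $\Dc$ denotes complex-valued or real-valued functions. When $f$ is real-valued, $fk$ is real-valued as well, since $k$ is real-valued, so the approximants lie in the real-valued part of $\Dc$ and the statement holds in either reading.
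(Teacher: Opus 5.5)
Your proof is correct and follows the paper's argument: you multiply by the Fejér-type kernel $k$ of Lemma~\ref{l:posdefkernel}, which satisfies $k\ge 1-\ve$ on the finitely many tiles carrying most of $\|f\|_X$, and you use that $kf$ stays positive definite. The differences are cosmetic. The paper uses this one kernel $k$ for both claims instead of a separate Urysohn cut-off for the first, and you state explicitly the bound $0\le k\le 1$ that the paper uses tacitly when it estimates $\|(1-k)|_{B+\ell}\|_\infty\le 1$ on the tail tiles.
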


Note that the statements are well-known for the usual topologies of uniform or compact convergence, but here we deal with a mixed-norm space with the larger norm $\| \cdot \|_X$ than  $\| \cdot \|_\infty$.
\begin{proof} Take a function $f\in X=C^{\infty,1}(G;\RR)$, and consider the norm represented by the sum  $\|f\|_X=\sum_{\ell \in L} c_\ell$, where $c_\ell:=\|f|_{B+\ell}\|_\infty$. As the sum is convergent, for any $\ve>0$ there is a finite set $L' \subseteq  L$ with $\sum_{\ell \in L\setminus L'} c_\ell <\ve$. Take $K:=B+L'$. This is a relatively compact set, so that Lemma \ref{l:posdefkernel} can be applied and we find a positive definite kernel $k$ with the properties (1)-(5) listed there. Now if $\ell \in L$, then
\[
\| (f - k\cdot f)|_{B+\ell} \|_\infty \le \|(1-k)|_{B+\ell}\|_\infty  \|f|_{B+\ell} \|_\infty \le
\begin{cases}  \ve c_\ell, \qquad & \text{if} \quad \ell \in L',
\\
c_\ell, \qquad & \text{if} \quad \ell \in L\setminus L'.
\end{cases}
\]
Altogether,
\[
\| f- k\cdot f\|_X \le \sum_{\ell \in L'} \ve c_\ell  + \sum_{\ell \in L\setminus L'} c_\ell \le \ve \|f\|_X +\ve.
\]
This proves $k \cdot f \in X$ and the first approximation statement, for $k \cdot f \in C_c(G;\RR)$. For the last statement it suffices to recall that $k$ from Lemma \ref{l:posdefkernel} was also positive definite, and for a positive definite $f$ also $k \cdot f$ is such, so that $k\cdot f \in \Dc \cap X$, too.
\end{proof}

The measure theoretic counterpart of the space $X$ is also of amalgam type, and was introduced by Phuong-C\'ac in \cite{Cac}, and studied later by  Liu, van Rooij, and Wang \cite{lrj},  Holland \cite{Holland1975, Holland75b},  Bertrandias, Datry, Dupuis \cite{BDD}, Stewart \cite{Stewart1979}.
We will concentrate on a special case, that is the space of translation bounded measures, see the papers  Argabright, Gil de Lamadrid \cite{AG-Memoirs}, Lin \cite{Lin}, Thornett \cite{Thornett} and the monograph Berg, Forst \cite{BergForst}.
We collect some details here for later reference.

One natural topology on $C_c(G;\RR)$ is the inductive limit topology defined by the subspaces $\{f\in C(G;\RR):\supp(f)\subseteq K\}$ for  $K\subseteq G$ compact. Elements of the dual space $(C_c(G;\RR))'$ are called \emph{Radon measures}, see \cite{AG-Memoirs}. If $\psi \in(C_c(G;\RR))'$ is a positive functional, then the Riesz-Markov-Kakutani theorem yields the existence  of a positive $\si$-additive measure $\mu$ on the Borel $\sigma$-algebra such that
\[
\psi(f)=\int_Gf\dd\mu\quad\text{for all $f\in C_c(G;\RR)$},
\]
and uniqueness is also ensured if one requires regularity properties from $\mu$, making it a \emph{Radon measure in the sense of measure theory}. For unambiguity let us call such a $\mu$ a Radon-Borel measure, to emphasize that we indeed have a ($\si$-additive) measure on the Borel $\si$-algebra. Any functional  $\psi \in(C_c(G;\RR))'$ can be represented uniquely as follows: There are $\mu^+,\mu^-$ positive Radon-Borel measures such that
\[
\psi(f)=\int_Gf\dd\mu^+-\int_Gf\dd\mu^-\quad\text{for all $f\in C_c(G;\RR)$}.
\]
Note however that $\mu^+-\mu^-$ does not necessarily exist as a signed measure, but $|\mu|:=\mu^++\mu^-$ is  a positive Radon-Borel measure and represents the functional $|\psi|\in (C_c(G;\RR))'$.

 \medskip From the above arguments, we immediately obtain that for every linear functional $\psi\in X'$ there are positive Radon-Borel measures  $\mu^+,\mu^-$ such that
\[
\psi(f)=\int_Gf\dd\mu^+-\int_Gf\dd\mu^-\quad\text{and}\quad |\psi|(f)=\int_Gf\dd|\mu|\quad \text{for all $f\in X$}.
\]
From now on, we shall not distinguish a positive Radon measure from its representing Radon-Borel measure.


	A Radon measure $\psi$ is called \emph{translation bounded} if for some  compact neighborhood~$K$ of $0$ there exists a  constant $C$ such that
	\begin{equation}\label{eq:TB}
	\sup_{x\in G}|\psi|(K+x)\le C.
	\end{equation}
(See  Argabright, Gil de Lamadrid \cite{AG-Memoirs}, Lin \cite{Lin}, Thornett \cite{Thornett} and Berg, Forst \cite{BergForst}.)
The space of translation bounded (real-valued) Radon measures is denoted\footnote{Another often used notation is $M_\infty$.} by $M:=M(G;\RR)$.
It is easy to see that if $\psi$ is translation bounded, then \eqref{eq:TB} holds for every (relatively)  compact set $K\subseteq G$.
We introduce the norm
\[
\|\psi\|_M:=\sup_{\ell \in L} |\psi|(B+\ell)
\]
turning $M(G;\RR)$ into a Banach space. Note that we could have chosen as well the norm $\|\psi\|^* := \sup_{x\in G} |\psi|(B+x)$, which is fitting to \eqref{eq:TB} with $K=B$ and defines the same topology.

Given $f\in C^{\infty,1}(G;\RR)$ and $\psi\in M(G;\RR)$ we have that
\[
\langle f,\psi\rangle:= \sum_{\ell \in L}\Bigl( \int_{B+\ell} f \dd\psi^+-\int_{B+\ell} f \dd\psi^-\Bigr)\in \RR,
\]
and
\[
|\langle f,\psi\rangle|\leq\sum_{\ell\in L}\|f|_{B+\ell}\|_\infty \cdot |\psi|(B+\ell)\leq \|\psi\|_{M}\cdot \|f\|_{X}.
\]

It follows that $\psi\in X'$ with $\|\psi\|_{X'}\leq \|\psi\|_{M}$.
The following result, describing the dual space of $X$,  is well-known, see, e.g., Goldberg \cite{Goldberg1967}, Phuong-C\'ac \cite{Cac}, Feichtinger \cite{Feichtinger1977}, Liu, van Rooij, Wang \cite{lrj} or Stewart \cite{Stewart1979}. We include the proof for illustration, and because it is particularly short.
\begin{lemma}\label{l:Dual-of-X} The dual space of the Banach space $C^{\infty,1}(G;\RR)$ is (isomorphic to) $M(G;\RR)$.
\end{lemma}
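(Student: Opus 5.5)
We have already seen that every $\psi\in M(G;\RR)$ defines an element of $X'$ with $\|\psi\|_{X'}\le\|\psi\|_M$, and this embedding is clearly linear. It therefore remains to prove two things: that the map is surjective, and that there is a reverse norm estimate $\|\psi\|_M\le C\|\psi\|_{X'}$. Injectivity follows from density of $C_c(G;\RR)$ in $X$ (Proposition \ref{prop:density}), because a Radon measure is determined by its action on $C_c$. The plan is to start from an arbitrary $\varphi\in X'$ and restrict it to $C_c(G;\RR)$.

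For each compact $K$, the functions supported in $K$ satisfy $\|f\|_X\le N_K\|f\|_\infty$, where $N_K$ is the (finite) number of $\ell\in L$ with $(B+\ell)\cap K\neq\emptyset$. So the restriction is continuous for the inductive limit topology, and $\varphi|_{C_c}$ is a Radon measure. Its positive and negative parts $\mu^\pm$ come from the Riesz--Markov--Kakutani representation, as recalled above.

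Next comes translation boundedness. For $\ell\in L$ I will estimate $|\varphi|(B+\ell)$, which is at most $|\varphi|(U+\ell)$ for some fixed relatively compact open $U\supseteq\overline B$. By the Riesz representation of the variation,
\[
|\varphi|(U+\ell)=\sup\{\varphi(f):f\in C_c(G;\RR),\ \supp f\subseteq U+\ell,\ |f|\le 1\}.
\]
Any such $f$ satisfies $\|f\|_X\le N$, where $N$ is the number of lattice translates of $B$ meeting $U$. This count does not depend on $\ell$, since translating by $\ell$ only shifts the indices. Hence $|\varphi|(B+\ell)\le N\|\varphi\|_{X'}$, so $\varphi$ is translation bounded with $\|\varphi\|_M\le N\|\varphi\|_{X'}$.

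Finally, I need the pairing $\langle f,\psi\rangle$, for $\psi$ the measure just constructed, to agree with $\varphi(f)$ for every $f\in X$, not only on $C_c$. Both $\varphi$ and $f\mapsto\langle f,\psi\rangle$ are continuous on $X$; the second is continuous by the estimate $|\langle f,\psi\rangle|\le\|\psi\|_M\|f\|_X$ shown above. They coincide on the dense subspace $C_c(G;\RR)$ by Proposition \ref{prop:density}, so they coincide on all of $X$. This gives an isomorphism with equivalent norms.

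The main subtle point is the variation step. I have to check that the Radon-measure variation $|\varphi|$ is computed by the sup over functions $|f|\le1$ supported in open sets, and that $\overline B$ can be covered by an open relatively compact $U$ so that the uniform count $N$ exists. Both follow from relative compactness of $B$ together with the counting remark made earlier for relatively compact $V$, so I expect no serious difficulty.
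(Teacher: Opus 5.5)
Your proof is correct and follows the paper's idea. Test functions with $|f|\le 1$ supported in a relatively compact open neighbourhood of $B+\ell$ meet at most $N$ cells, so $\|f\|_X\le N$, which yields $\|\psi\|_M\le N\|\psi\|_{X'}$. The differences are only technical: you use the variation formula on open sets instead of the paper's reduction to a positive functional with regularity approximations and a maximizing sequence $(\ell_k)$, and you spell out the density argument extending the identification from $C_c(G;\RR)$ to all of $X$, which the paper leaves implicit.
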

\begin{proof} We saw above that translation bounded Radon measures give rise to continuous linear functionals on $X=C^{\infty,1}(G;\RR)$. Conversely, take $\psi\in (C^{\infty,1}(G;\RR))'$; we need to prove that $\psi$ is translation bounded. By decomposing into positive and negative parts, we may suppose without loss of generality that $\psi$ is a positive functional. Let $(\ell_k)\subseteq L$ be such that $\psi(B+\ell_k)\to \sup_{\ell \in L} \psi(B+\ell)$ as $k\to\infty$. Take an arbitrary relatively compact, open set $V$ containing $B$. Then there are at most finitely many, say $N\in \NN$, lattice points $\ell\in L$ such that $(B+\ell) \cap V\neq \emptyset$ (since $L$ is discrete and $V-B$ is relatively compact). Let $\veps>0$ be arbitrarily given and let $k\in \NN$ be fixed. Take a compact $K_k\subseteq B+\ell_k$ and  a relatively compact, open $U_k\supseteq B+\ell_k$ with  $\psi(U_k\setminus K_k)<\veps$.
We may suppose also that $U_k\subseteq V+\ell_k$, so that $U_k$ intersects at most $N$ cells of the form $B+\ell$ ($\ell\in L$).  Take a continuous function $f_k$ with support $\supp ( f_k)\subseteq U_k$, $f_k|_{K_k}=1$, $f(G)\subseteq [0,1]$. Then $f_k\in C_c(G;\RR)$ and $\|f_k\|_X\leq N$, so
	\begin{align*}
	\|\psi\|_{X'}N&\geq \|\psi\|_{X'}\cdot\|f_k\|_X\geq \psi(f_k)=\int_G f_k\dd\psi
	=\int_{U_k} f_k\dd\psi
	\geq \int_{K_k} f_k\dd\psi\\
	&=\psi(K_k)=\psi(B+\ell_k)-\psi((B+\ell_k)\setminus K_k)\geq \psi(B+\ell_k)-\veps.
	\end{align*}
With $k\to \infty$ we conclude
	\[
	N\|\psi\|_{X'}\geq \sup_{\ell \in L} \psi(B+\ell)-\veps = \| \psi \|_M - \veps,
	\]
	and hence $\psi$ is translation bounded.	

The equivalence of the norms $\|\cdot\|_{X'}$ and $\|\cdot\|_M$ follows at once, too.
\end{proof}

\label{sec:dualcones}
\begin{proposition}\label{prop:Mdual}
For a fixed subset $A\subseteq G$ consider the convex cone
	\[
	Q_A:=\{f\in C^{\infty,1}(G;\RR):f|_A\leq 0\}.
	\]
Then for the dual cone we have
	\begin{equation}\label{eq:MA}
	Q_A^*=\{\psi\in M(G; \RR):\psi=-\psi^-\text{ and $\psi^-(G\setminus \oA)=0$}\}.
	\end{equation}
\end{proposition}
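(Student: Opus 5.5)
The proof is a double inclusion. For ``$\supseteq$'' we use that a nonpositive function stays nonpositive on $\oA$ by continuity. For ``$\subseteq$'' we test $\psi$ against two kinds of functions that obviously lie in $Q_A$: all nonpositive functions, and all functions supported away from $\oA$. Throughout we identify $X'$ with $M(G;\RR)$ via Lemma \ref{l:Dual-of-X}, so each $\psi\in Q_A^*$ is a translation bounded Radon measure $\psi=\psi^+-\psi^-$.

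\emph{Inclusion ``$\supseteq$''.} Let $\psi=-\psi^-$ with $\psi^-$ a positive translation bounded measure and $\psi^-(G\setminus\oA)=0$. Take $f\in Q_A$. Since $f$ is continuous and $f|_A\le 0$, we get $f|_{\oA}\le 0$. Because $\psi^-$ is concentrated on $\oA$,
\[
\langle f,\psi\rangle=-\sum_{\ell\in L}\int_{(B+\ell)\cap\oA} f\dd\psi^-\ \ge 0 .
\]
The series converges absolutely by the estimate $|\langle f,\psi\rangle|\le\|\psi\|_M\|f\|_X$, so rearranging it is harmless. Hence $\psi\in Q_A^*$.

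\emph{Inclusion ``$\subseteq$''.} Let $\psi\in Q_A^*$.

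First, every $f\in X$ with $f\le 0$ belongs to $Q_A$, so $\psi(f)\ge0$ for all such $f$. In particular $-\psi$ is a positive linear functional on $C_c(G;\RR)$. By the Riesz--Markov--Kakutani theorem it is represented by a positive Radon--Borel measure $\mu$. By uniqueness of the decomposition $\psi=\psi^+-\psi^-$ into positive Radon measures (minimality of the Jordan-type decomposition of functionals on $C_c$), we get $\psi^+=0$ and $\psi^-=\mu$. The representation then extends from $C_c(G;\RR)$ to all of $X$ by the density statement of Proposition \ref{prop:density} together with continuity of $\psi$ on $X$.

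Second, let $U:=G\setminus\oA$, which is open. For any $f\in C_c(G;\RR)$ with $\supp f\subseteq U$, both $f$ and $-f$ vanish on $A$, so both lie in $Q_A$. Hence $\psi(f)\ge0$ and $-\psi(f)\ge0$, i.e.\ $\int f\dd\psi^-=0$. Now take a compact $K\subseteq U$. Urysohn's lemma gives $f\in C_c(G;\RR)$ with $0\le f\le1$, $f|_K=1$ and $\supp f\subseteq U$, so $\psi^-(K)\le\int f\dd\psi^-=0$. By inner regularity of Radon measures on open sets, $\psi^-(U)=\sup_K\psi^-(K)=0$. This gives exactly the description \eqref{eq:MA}.

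The only delicate point is measure-theoretic bookkeeping. One must pass from ``$\psi$ is nonnegative on nonpositive test functions'' to $\psi^+=0$ as measures, using uniqueness of the regular representing measures. One also needs inner regularity on the open set $G\setminus\oA$, which holds for Radon--Borel measures in the standard convention (even if $G\setminus\oA$ is not $\si$-compact). Apart from that, the argument only uses continuity of $f$, which is why the closure $\oA$, rather than $A$, appears.
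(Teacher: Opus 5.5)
Your proof is correct, but you organize the inclusion ``$\subseteq$'' differently from the paper.

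After reducing to closed $A$ via $Q_A=Q_{\oA}$, the paper first shows $|\psi|(G\setminus A)=0$ while $\psi$ is still signed. Around a compact $K\subseteq G\setminus A$ it takes a Hahn decomposition $U=U^+\cup U^-$ of a relatively compact open $U$, compact sets $K^\pm\subseteq K\cap U^\pm$, and an Urysohn function equal to $-1$ on $K^+$ and $+1$ on $K^-$. This yields $0\le\psi(f)\le 3\veps-|\psi|(K)$. Only afterwards does it use a nonpositive bump around a compact $H\subseteq A$ to get $\psi^+(A)=0$.

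You reverse the order. Since every nonpositive function lies in $Q_A$, $-\psi$ is a positive functional, so $\psi^+=0$ globally by minimality of the lattice decomposition. After that, the support statement needs only three things: a nonnegative Urysohn function, the fact that $\psi$ annihilates $C_c$-functions supported in $G\setminus\oA$, and inner regularity of $\psi^-$ on that open set. No Hahn decomposition or $\veps$-bookkeeping is required.

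For ``$\supseteq$'' you evaluate the cell-wise series defining $\langle f,\psi\rangle$ directly; each term $-\int_{(B+\ell)\cap\oA}f\dd\psi^-$ is nonnegative. The paper instead verifies the inequality on $C_c(G;\RR)\cap Q_A$ and appeals to density. Your version avoids a point the paper leaves implicit: that the approximants $k\cdot f$ of Proposition~\ref{prop:density} stay in $Q_A$, which holds because $k\ge 0$.

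The paper's ordering is more local: it proves the support condition without first knowing the sign of $\psi$, so it does not rely on $Q_A$ containing all nonpositive functions. For the one-sided cone at hand, your route is shorter and equally rigorous.
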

\begin{proof}
If $A\subseteq  G$ is an arbitrary set, then every continuous function $f\in C(G;\RR)$ with $f|_A \le 0$ satisfies also $f|_{\overline{A}}\le 0$.
Therefore, $Q_A=Q_{\oA}$ and henceforth it suffices to deal with closed sets, whereas for arbitrary sets we can apply the resulting description of the dual cone for $\overline{A}$ in place of $A$.

If $\psi$ is in the set on the right-hand side in \eqref{eq:MA} and $f\in C_c(G;\RR)\cap Q_A$, then	
\[
\psi(f)=-\psi^-(f)=-\int_G f\dd\psi^-=-\int_A f\dd\psi^-\geq 0.
\]
So by denseness\footnote{We use that $C_c(G;\RR)$ is dense in $X$ with respect to the topology of $X$. See Proposition \ref{prop:density}.} we obtain $\psi\in Q_A^*$.

\medskip\noindent
Conversely, suppose that $\psi\in 	Q_A^*$. We first prove that $|\psi|(G\setminus A)=0$.
Take an arbitrary compact set $K\subseteq G\setminus A$ and  for $\veps>0$ take a relatively compact, open  set $U$ with $K\subseteq U\subseteq G\setminus A$ and $|\psi|(U\setminus K)<\veps$. Consider the Hahn-decomposition of the signed Radon-Borel measure $\psi|_U=\psi^+|_U-\psi^-|_U$: we can write $U=U^+\cup U^-$ with $U^+\cap U^-=\emptyset$, $\psi^+(U^-)=0=\psi^-(U^+)$. Let $K^{\pm} \subseteq K \cap U^{\pm}$ be compact sets with $|\psi|(K\setminus (K^+\cup K^-))<\veps$. Finally, using Urysohn's Lemma, take a continuous function $f$ with $\supp (f)\subseteq U$, $f|_{K^+}=-1$, $f|_{K^-}=1$ and $f(G)\subseteq [-1,1]$. Then $f\in C_c(G;\RR)$ and $f|_{A} \equiv 0$, so $f\in Q_A$ and we have
\begin{align*}
0\leq \psi(f)&=\int_U f\dd\psi \leq 2 \veps+\int_{K^+ \cup K^-} f\dd\psi = 2\veps+\int_{K^+} f\dd\psi^+-\int_{K^-}f \dd\psi^-\\
&= 2\varepsilon-\psi^+(K^+)-\psi^{-}(K^-) = 2\varepsilon-|\psi|(K^+\cup K^-)\leq 3\varepsilon-|\psi|(K).
\end{align*}

\noindent This being true for all $\veps >0$ implies $|\psi|(K)=0$, and by regularity $|\psi|(G\setminus A)=0$ follows.

\medskip\noindent Next we prove $\psi^+(A)=0$. Suppose the contrary, i.e., $\psi^+(A)>0$, so that there is a compact $H\subseteq A$ with $\psi^{+}(H)>0$ and $\psi^{-}(H)=0$. For $\veps\in (0,\psi^+(H))$ take $V$ relatively compact open with $H\subseteq V$ and $|\psi|(V\setminus H)<\veps$ and a continuous function $g$ with $\supp(g)\subseteq V$, $g|_H=-1$ and $g(G)\subseteq [-1,0]$. Then $g\in C_c(G;\RR)$ and $g \in Q_A$ as $g\le 0$ everywhere, so using $\psi \in Q_A^*$ we find
\begin{align*}
\psi(g)&=\int_V g\dd\psi^+-\int_Vg \dd\psi^-\leq \veps +\int_H g\dd\psi^+-\int_H g \dd\psi^-=\veps+\int_H g\dd\psi^+=\veps-\psi^+(H)<0.
\end{align*}
It follows that  $\psi\not \in Q_A^*$, a contradiction. We thus conclude  $\psi^+(A)=0$, so $\psi^+=0$. The proof is complete.
\end{proof}

An \emph{even} function $f:G\to \RR$ is one that is invariant under inversion $\inv:G\to G$, $g\mapsto -g$, i.e., that satisfies $f\circ \inv=f$; while $f$ is \emph{odd} if $f\circ \inv=-f$.
Dually a Radon measure $\psi$ is even, respectively odd, if it satisfies
$\psi(f\circ \inv)=\psi(f)$, respectively, $\psi(f\circ \inv)=-\psi(f)$ for every $f\in C_c(G;\RR)$. The family of real-valued and translation bounded, odd Radon measures will be denoted as $\OO$. Obviously,  $\OO$ is a closed subspace, hence is also a closed cone of $M(G;\RR)$.
\begin{proposition}[R\'ev\'esz-Ga\'al \cite{Zeitschrift}]\label{prop:realposdeffunct}
A function $f\in C(G;\RR)$ is positive definite if and only if it is even and  \emph{positive definite in the real sense}, the latter property meaning that
\[
\sum_{j=1}^n \sum_{k=1}^n c_j c_k f(x_j-x_k) \ge 0 \quad\text{for all $c_1,\ldots,c_n \in \RR$, $n \in \NN$}.
\]
\end{proposition}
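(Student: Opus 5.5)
We prove the two implications separately, working directly with the quadratic-form definition \eqref{eqposdef2} of positive definiteness for complex coefficients.

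\emph{Positive definite $\Rightarrow$ even and positive definite in the real sense.} If $f\in C(G;\RR)$ is positive definite, then restricting \eqref{eqposdef2} to real coefficients gives positive definiteness in the real sense. For evenness we use the standard Hermitian symmetry of positive definite functions. Take $n=2$ with points $x_1=x$, $x_2=0$ and arbitrary $c_1,c_2\in\CC$. The quantity
\[
(|c_1|^2+|c_2|^2)f(0)+c_1\overline{c_2}f(x)+c_2\overline{c_1}f(-x)
\]
must then be real. Choosing $c_1=1$, $c_2=i$, its imaginary part is $-f(x)+f(-x)$, so this must vanish. Since $f$ is real-valued, this gives $f(-x)=f(x)$.

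\emph{Even and positive definite in the real sense $\Rightarrow$ positive definite.} Let $f$ be even and positive definite in the real sense, and take complex coefficients $c_k=a_k+ib_k$ with $a_k,b_k\in\RR$. Expanding, we get
\[
\sum_{j,k}c_j\overline{c_k}f(x_j-x_k)=\sum_{j,k}(a_ja_k+b_jb_k)f(x_j-x_k)+i\sum_{j,k}(b_ja_k-a_jb_k)f(x_j-x_k).
\]
The real part is a sum of two real-sense quadratic forms, one in $(a_k)$ and one in $(b_k)$, so it is nonnegative. For the imaginary part, interchange $j$ and $k$ in the term $\sum_{j,k}a_jb_kf(x_j-x_k)$. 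This turns it into $\sum_{j,k}b_ja_kf(x_k-x_j)$, which equals $\sum_{j,k}b_ja_kf(x_j-x_k)$ by evenness. Hence the two sums in the imaginary part cancel, the imaginary part is zero, and the full sum is real and nonnegative.

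Neither direction presents a real obstacle. The only point needing care is that evenness is used exactly once, to cancel the imaginary cross terms. Without evenness, real-sense positive definiteness only sees the even part of $f$, since the real quadratic form is symmetric in $j$ and $k$; this is precisely why evenness must appear as a separate hypothesis.
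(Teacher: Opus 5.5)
Your proof is correct. The $2\times 2$ test with $c_1=1$, $c_2=i$ forces $f(-x)=f(x)$, and the splitting $c_k=a_k+ib_k$ makes the imaginary cross term vanish by evenness. The paper gives no proof of this statement and only cites Ga\'al--R\'ev\'esz, but your argument is the same as the one it uses for the measure analogue in the next proposition: there a complex weight $w=u+iv$ is split, the odd cross term $v\star\widetilde u-u\star\widetilde v$ is killed by evenness, and evenness comes from the Hermitian symmetry $\mu=\widetilde\mu$, which the paper quotes from Argabright--Gil de Lamadrid whereas you derive its function version directly.
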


Note that here the usual requirement of non-negativity of quadratic forms is required only with real (as opposed to the usual complex) coefficients, hence no conjugation is used either. To do real linear programming it is necessary to restrict to $C(G;\RR)$, and the description of the class is then needed throughout.

\begin{definition}
	A Radon measure (that is, a linear functional) $\mu \in M(G;\CC)$ is called a \emph{measure of positive type}, if for all ''weight functions'' $ u\in C_c(G;\CC)$ we have
	\begin{equation}\label{eqposdef}
	\langle u\star \widetilde{u},\mu\rangle \ge 0.
\end{equation}
We denote this by writing $\mu\gg 0$.
The family of all real-valued measures of positive type will be denoted by $\MM$.
\end{definition}

\begin{proposition} A measure $\mu \in M(G;\RR)$  belongs to $\MM$ if and only if it is even and satisfies \eqref{eqposdef} for all \emph{real-valued} weights $u \in C_c(G;\RR)$. 
\end{proposition}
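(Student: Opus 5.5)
We prove both implications, following the same split as in Proposition~\ref{prop:realposdeffunct}: for complex weights, decompose $u = a + ib$ with $a,b\in C_c(G;\RR)$ and expand $u\star\widetilde u$.

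For the ``only if'' direction, let $\mu\in\MM$. Condition \eqref{eqposdef} for real weights is a special case. It remains to show $\mu$ is even. Since $\mu$ is real-valued, $\langle g,\mu\rangle$ is real for real $g$. Write $\check g:=g\circ\inv$. For real $a,b$ we have $\widetilde a=\check a$, and
\[
u\star\widetilde u = a\star\check a + b\star\check b + i\bigl(b\star\check a - a\star\check b\bigr).
\]
Positivity of $\langle u\star\widetilde u,\mu\rangle$ forces the imaginary part to vanish, so
\[
\langle b\star\check a,\mu\rangle=\langle a\star\check b,\mu\rangle \quad\text{for all real } a,b\in C_c(G;\RR).
\]
Note that $a\star\check b = (b\star\check a)\circ\inv$. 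Hence $\langle h\circ \inv,\mu\rangle=\langle h,\mu\rangle$ for all $h$ of the form $b\star\check a$. The linear span of such convolutions is dense in $C_c(G;\RR)$ in the inductive limit topology. To see this, take $b$ arbitrary and let $a$ run through an approximate identity; then $b\star\check a\to b$ uniformly, with supports inside a fixed compact set. By continuity of $\mu$ as a Radon measure, $\mu$ is even.

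For the ``if'' direction, assume $\mu$ is even and satisfies \eqref{eqposdef} for real weights. For complex $u=a+ib$, the computation above gives
\[
\langle u\star\widetilde u,\mu\rangle=\langle a\star\check a,\mu\rangle+\langle b\star\check b,\mu\rangle + i\bigl(\langle b\star\check a,\mu\rangle-\langle (b\star\check a)\circ\inv,\mu\rangle\bigr).
\]
Evenness kills the imaginary term, and the two real terms are nonnegative by assumption. Thus $\mu\gg0$, and it is real-valued by hypothesis, so $\mu\in\MM$.

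The main technical point is the density step, together with checking that all pairings are legitimate. Since $u\in C_c$, the convolutions $u\star\widetilde u$ lie in $C_c(G;\CC)\subseteq C^{\infty,1}$, so the pairings make sense. The density itself follows from uniform continuity of compactly supported functions and the existence of a nonnegative approximate identity in $C_c(G;\RR)$ with shrinking supports. Everything else is algebra.
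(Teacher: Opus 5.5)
Your proof is correct. The ``if'' direction is exactly the paper's argument: split $w=u+iv$, note that the cross term $v\star\widetilde u-u\star\widetilde v$ is odd, and let evenness of $\mu$ annihilate it.

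For evenness in the ``only if'' direction you take a different route.

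\emph{The paper's route.} It invokes the Hermitian symmetry $\mu=\widetilde{\mu}$ of positive definite measures, citing Argabright--Gil de Lamadrid (their Proposition 4.1). It then reads off $\langle u,\mu\rangle=\overline{\langle \widetilde u,\mu\rangle}=\langle u\circ\inv,\mu\rangle$ for real $u$.

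\emph{Your route.} You derive evenness from scratch. Since $\langle u\star\widetilde u,\mu\rangle\ge 0$ is in particular real and $\mu$ is real-valued, the imaginary part $\langle h-h\circ\inv,\mu\rangle$ with $h=b\star\check a$ must vanish. An approximate identity $a\to\delta_0$, with supports in a fixed compact neighbourhood, then gives $\mu(b)=\mu(b\circ\inv)$ for every $b\in C_c(G;\RR)$. This is precisely the paper's definition of an even measure. In effect you reprove the cited Hermitian property in the real-valued case.

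\emph{What each buys.} Your route is self-contained, and it makes the two directions two readings of one identity. For real $\mu$ and $u=a+ib$, the imaginary part of $\langle u\star\widetilde u,\mu\rangle$ is $\langle h-h\circ\inv,\mu\rangle$. Hence positivity on complex weights is the same as positivity on real weights plus evenness. The paper's route is shorter and gets $\langle u,\mu\rangle=\langle u\circ\inv,\mu\rangle$ on all of $C_c(G;\RR)$ at once, with the real work delegated to the cited result.

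A minor point: you do not need density of the linear span of the convolutions $b\star\check a$. The direct convergence $b\star\check a\to b$ in the inductive-limit topology already suffices, as your own argument shows.
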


Note that the point of the above is that in case we only assume \eqref{eqposdef} for  real weight functions, then we do not get evenness of the measure (as is known for (integrally) positive definiteness). The same way, if a continuous function satisfies \eqref{eqposdef2} with only real coefficients, then it may not be even. Altogether, the classes satisfying the defining equations only for real weights or coefficients is larger than the family of the real-valued elements of the respective positive definite classes. Note that, e.g., an odd measure is orthogonal to any---necessarily even---function $u \star \widetilde{u}$ with $u \in C_c(G;\RR)$, hence it satisfies $\langle u\star \widetilde{u},\mu\rangle\ge 0$ for real weights. According to the proposition, the class of functions with ``positive definiteness with respect to real weight functions'' can be obtained as the sum of the sets of the truly positive definite (and real-valued) ones and the odd ones.

\begin{proof} ``$\Rightarrow$'': The part on being real-valued, and even non-negative on $u\star\widetilde{u}$ for $u \in C_c(G;\RR)$ is clear, as a $\mu \in \MM$ is by definition real-valued, and is a measure of positive type with respect to real-valued weights, too.

It remains to see why it is even. Key to this is the property\footnote{By definition $\widetilde{\mu}(f):=\overline{\mu(\widetilde{f})}$, compare Argabright, Gil de Lamadrid \cite{AG-Memoirs}, page 2, where the $~\widetilde{}~$ operation is called ``the involution'' and is denoted by a $~^\star~$. } $\mu=\widetilde{\mu}$ for $\mu$ positive definite. The symmetry property $\mu=\widetilde{\mu}$ is given in Proposition 4.1 of \cite{AG-Memoirs}, compare the definition of the involution $^\star$ on page 2 and the definition of the family of positive definite Radon measures on page 23.

Let $u \in C_c(G,\RR)$. Then $\langle u,\mu\rangle=\langle u,\wt{\mu}\rangle=\overline{\langle \wt{u}, \mu\rangle} =\langle u\circ\inv ,\mu\rangle$, for now both $\mu$ and $u$ are real-valued. This is equivalent to $\mu$ being even, as one can approximate the characteristic function of any compact set by such $u \in C_c(G;\RR)$ arbitrarily well even in $X$-norm.

\medskip\noindent  ``$\Leftarrow$'': Let $\mu\in M(G;\RR)$ be even and non-negative on convolution squares of real-valued $ u \in C_c(G;\RR)$. Take a complex-valued weight $w=u+iv$. Then we have
\[
\langle w\star\widetilde{w},\mu\rangle=\langle(u+iv) \star (\wt{u} - i\wt{v}),\mu\rangle = \langle(u \star \wt{u} + v \star \wt{v} + i(v \star \wt{u}-u \star \wt{v} ) ),\mu\rangle.
\]
Here $\langle u \star \wt{u},\mu\rangle \ge 0$ and $ \langle v \star \wt{v},\mu\rangle$ are non-negative by assumption, since $u, v \in C_c(G;\RR)$. Further, it is easy to check that $h:=v \star \wt{u}-u \star \wt{v} $ is an odd function, so that in case $\mu$ is even, we necessarily have $\langle h,\mu\rangle=0$. This proves $\langle w\star\widetilde{w},\mu\rangle\ge 0$, hence the assertion.
\end{proof}

\begin{proposition}\label{prop:Pdual}
The dual cone of $P:=\DX:=\DD\cap C^{\infty,1}(G;\RR)$ is
\[
P^*=\mathcal{M}+\mathcal{O}.
\]
\end{proposition}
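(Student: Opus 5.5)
We must show $P^*=\MM+\OO$, where $P=\DX$ and $P^*\subseteq M(G;\RR)$ by Lemma \ref{l:Dual-of-X}. The plan is to prove the two inclusions separately. Throughout we use the density statement of Proposition \ref{prop:density}: $\Dc\cap C(G;\RR)$, the real-valued compactly supported positive definite functions, is dense in $P$ in the $X$-norm. Hence $\psi\in P^*$ if and only if $\langle f,\psi\rangle\ge 0$ for all real-valued $f\in\Dc$.

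For ``$\supseteq$'', first take $\psi\in\OO$. Every real-valued $f\in P$ is even by Proposition \ref{prop:realposdeffunct}. Splitting $f$ as $\tfrac12(f+f\circ\inv)$ and using oddness of $\psi$ (first on $C_c$, then by density in $X$) gives $\langle f,\psi\rangle=0$. Next take $\mu\in\MM$. By the density above it suffices to check $\langle f,\mu\rangle\ge0$ for real $f\in\Dc$. Such an $f$ is a compactly supported continuous positive definite function, but it need not be of the form $u\star\wt u$. To handle this, I would approximate $f$ in $X$-norm by finite sums of convolution squares $u\star\wt u$ with $u\in C_c(G;\CC)$. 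Concretely:
\begin{itemize}
\item Take an approximate identity $e_\alpha=v_\alpha\star\wt v_\alpha$ with $v_\alpha\in C_c$ supported in shrinking neighbourhoods of $0$.
\item Note that $f\star e_\alpha\to f$ uniformly, with supports in a fixed compact set, so the convergence also holds in $X$.
\item Write $f\star e_\alpha$ as a limit of sums of the form $\sum c\, (\text{translate of }v_\alpha)\star\wt{(\cdot)}$. Positive definiteness of $f$, via Bochner's theorem with $\hat f\ge0$, lets one write $f\star v_\alpha\star\wt v_\alpha=\int \hat f(\gamma)\,|\hat v_\alpha(\gamma)|^2\gamma\,d\gamma$.
\end{itemize}
A more direct route is to use the known fact (Argabright–Gil de Lamadrid / Berg–Forst) that for a translation bounded measure of positive type, $\langle f,\mu\rangle\ge0$ for every $f\in\Dc$, or even for every integrable continuous positive definite $f$. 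I would cite this, and check that the pairing in \cite{AG-Memoirs} coincides with ours on $C_c$.

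For ``$\subseteq$'', let $\psi\in P^*$ and split it into even and odd parts, $\psi_e:=\tfrac12(\psi+\psi\circ\inv)$ and $\psi_o:=\psi-\psi_e$, where $\psi\circ\inv$ denotes the measure $u\mapsto\psi(u\circ\inv)$. Translation boundedness is preserved, since $\overline B$ is symmetric and translates are controlled by the norm $\|\cdot\|^*$. Thus $\psi_o\in\OO$. For $\psi_e$, take any $u\in C_c(G;\RR)$. Then $u\star\wt u$ is real, continuous, compactly supported and positive definite, so it lies in $P$. Since $P$ is invariant under $f\mapsto f\circ\inv$ (its elements are even), we get $\langle u\star\wt u,\psi_e\rangle=\langle u\star\wt u,\psi\rangle\ge0$. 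As $\psi_e$ is even and nonnegative on real convolution squares, the preceding Proposition gives $\psi_e\in\MM$. Hence $\psi=\psi_e+\psi_o\in\MM+\OO$.

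The main obstacle is the inclusion $\MM\subseteq P^*$. Positivity of $\mu$ is given only on convolution squares $u\star\wt u$, and one must pass to all of $\DX$. I would first try to prove that sums of convolution squares of $C_c$ functions are $X$-dense in $\Dc$, using the approximate identity and Fourier inversion sketched above. If that proves too technical, I would fall back on the cited result from \cite{AG-Memoirs}.
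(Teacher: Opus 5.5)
Your proposal is correct and follows the paper's proof. Odd measures annihilate the (necessarily even) elements of $P$, and the inclusion $\MM\subseteq P^*$ rests, as in the paper, on Theorem 4.3 of Argabright--Gil de Lamadrid \cite{AG-Memoirs}, so your tentative direct density argument is not needed. The inclusion $P^*\subseteq\MM+\OO$ comes from the splitting $\psi=\psi_e+\psi_o$. The only minor difference there is that you test $\psi_e$ on real convolution squares and invoke the preceding Proposition. The paper instead treats complex $u\in C_c(G;\CC)$ directly, using $\Re(u\star\wt u)\in P$ and the oddness of $\Im(u\star\wt u)$, which is the same computation packaged differently.
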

\begin{proof}
First we prove $\OO \subseteq  P^*$. Since a function $f\in P$ is even, for an odd measure $\psi\in \mathcal{O}$  we have
\begin{align*}
-\psi(f\circ \inv)=\psi(f)=\psi(f\circ \inv),
\end{align*}
so that $\psi(f)=0$ follows, i.e., $\mathcal{O}\subseteq P^*$.

Next we prove $\MM \subseteq  P^*$. Let $\psi\in\mathcal{M}$ be a measure of positive type in $M(G;\RR)$, so that in particular it is real-valued and translation bounded. We want to get $\psi(f)\ge 0$ for all $f \in P$. This is essentially Theorem 4.3 of Argabright, Gil de Lamadrid in \cite{AG-Memoirs} which says that any positive definite measure $\psi$ is non-negative on all continuous positive definite functions $f$ (not just on the ones with a representation as a convolution square of $C_c(G;\RR)$ functions), provided $f$ is integrable with respect to $\psi$. Note that for $f \in X$ we have this integrability condition once $\psi \in M(G;\RR)$ is translation bounded. In other words, $\psi(f)\ge 0$ for all $f \in P$, so $\psi$ is in $P^*$, which is exactly what we want.

Up to here we have $\OO, \MM \subseteq  P^*$, and as  $P^*$ is a cone, also $\MM+\OO \subseteq  P^*$.

\medskip
Conversely, we want to show\footnote{Note that this is in principle the harder direction, as usually from $K_1, K_2 \subseteq  C$ with certain cones $K_1, K_2, C$ it follows only that $K_1+K_2 \subseteq  C$---and therefore also $\overline{K_1+K_2} \subseteq  C$, if $C$ is closed---but the converse requires an argument.} that $P^* \subseteq  \MM+\OO$. Take any $\psi\in P^*$.
We then need to see $\psi \in \OO + \MM$.

Let us decompose $\psi$ to an odd and even part: $\psi=\psi_o+\psi_e$, $\psi_e=(\psi+\wt\psi)/2$, $\psi_o=(\psi-\wt\psi)/2$. Obviously, $\psi_o \in \OO$. So, it remains to see that $\psi_e \in \MM$, i.e., $\psi_e$ is a measure of positive type. This means $\int_G u\star \wt u\dd\psi_e=\langle u \star \widetilde{u},\psi_e \rangle \ge 0$ for all $u \in C_c(G;\CC)$. Note that here, exceptionally, we need to deal with complex-valued functions $u$, too.

As $u \star \widetilde{u}$ is (complex-valued and) positive definite, and it is also compactly supported, its real part $f:= \Re (u \star \widetilde{u})$ is also positive definite, real-valued and compactly supported, hence belongs to $P$. Now by condition $0 \le \langle f, \psi\rangle = \langle \Re (u\star\widetilde{u}),\psi   \rangle = \Re \langle  u \star \widetilde{u},\psi \rangle$, given that $\psi$ is a real-valued Radon measure.

Let us write $g:=\Im (u\star \wt u)$. Then with $F:=u \star \widetilde{u}=f+ig$ we have  $\widetilde{F}=F$, so $f(x)+ig(x)=F(x)=\overline{F(-x)}=f(-x)-ig(-x)$. Therefore, $f$ is even and $g$ is odd. It follows that $\langle u \star \widetilde{u},\psi \rangle=\langle F,\psi_e \rangle + \langle F,\psi_o\rangle =\langle  f , \psi_e\rangle + i\langle g,\psi_o \rangle$ and $\Re \langle u \star \widetilde{u},\psi \rangle=\langle  f ,\psi_e\rangle$. So, $\langle f,\psi_e \rangle = \Re \langle u \star \widetilde{u},\psi \rangle = \langle  \Re (u \star \widetilde{u}), \psi\rangle \ge 0$, and $\psi_e$ is a positive definite Radon measure $\psi_e \in \MM$. The proof of $P^* \subseteq  \OO + \MM$ is complete.
\end{proof}


\section{A dual cone intersection formula}\label{sec:Dual-Cone-Intersection}


%
Recall the definition of the Banach space $X=C^{\infty,1}(G;\RR)$.
We set $P=X\cap \DD$, i.e., $P$ is the closed cone of positive definite functions in $X$. For an arbitrary set $A$ we define
\[
Q:=Q_A:=\{f\in X:f|_A\leq 0\}.
\]
In Section \ref{sec:dualcones} we have determined the dual cones.
As seen in Section \ref{sec:discrete} the crux of the whole argument for the strong duality result is lies  in proving $(P\cap Q)^*=P^*+Q^*$, without the need for weak$^*$ closure here. We want to invoke the Jeyakumar-Wolkowicz Lemma---Lemma 2.2 (a) in \cite{JeyaWolf}---which tells that a sufficient condition is that $P-Q$ is a closed subspace. We will succeed, similarly to Ga\'al, R\'ev\'esz \cite{Zeitschrift}, with the following.

\begin{theorem}\label{t:Ppluslemma} With the previous notation we  have $P-Q=X$.
\end{theorem}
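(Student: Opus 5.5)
Throughout I take $A=G\setminus\Omega$ with $0\in\intt\Omega$, as in the intended application. This hypothesis is genuinely needed: for $A=G$ and $f<0$ the statement fails, because a positive definite $p$ has $p(0)\ge0$. Since $P$ and $Q$ are cones, $P-Q=X$ means that every $f\in X$ can be written $f=p-q$ with $p\in P$, $q\in Q$. Equivalently, for each $f\in X$ I need some $p\in P$ with $p\le f$ on $A$; then $q:=p-f\in Q$. It is enough to find $p\in P$ with $p\le-|f|$ on $A$. The plan is to build $p$ as a large positive definite bump supported inside $\intt\Omega$ minus a positive definite–compatible superposition of lattice translates of a fixed kernel, where the superposition majorizes $|f|$. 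The model is the discrete case of Section~\ref{sec:discrete}, where $\car_{\{0\}}$ is an interior point of the positive definite cone.

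\emph{Step 1 (kernels).} Using Lemma~\ref{l:posdefkernel} with a small symmetric $H_0$, take $k_0=\frac1{\haar(H_0)}\one_{H_0}\star\one_{H_0}$, chosen so that $H_0+H_0\subseteq\intt\Omega$. Then $k_0\ge0$, $k_0\gg0$, and $\supp k_0\subseteq\intt\Omega$. Using Lemma~\ref{l:posdefkernel} again with a large set $H$, take a kernel $h\ge0$, $h\gg0$, $h\in C_c$, such that $k:=k_0\star h\ge1$ on $\overline{B}$ after a normalisation. This works because $k_0\star h\ge\bigl(\min_{\overline{B}-H_0}h\bigr)\int k_0\,\dd\haar$, and $h$ can be made $\ge1/2$ on the compact set $\overline{B}-\overline{H_0}$. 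On the Fourier side $\ft k=\ft{k_0}\,\ft h$ with $0\le\ft h\le\|h\|_1$, so $\ft k\le\|h\|_1\ft{k_0}$. This domination is the key device: it controls the possibly vanishing Fourier transform $\ft{k_0}$ without requiring any interior point.

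\emph{Step 2 (majorant).} Set $c_\ell:=\|f|_{B+\ell}\|_\infty$ for $\ell\in L$. Then $(c_\ell)\in\ell^1(L)$ with $\sum_\ell c_\ell=\|f\|_X$. Symmetrize by putting $c'_\ell:=c_\ell+c_{-\ell}$, so $c'$ is even and nonnegative. Since $k\ge0$ and $k\ge1$ on $\overline{B}$, every $x\in B+\ell$ satisfies
\[
\sum_{m\in L}c'_m\,k(x-m)\ge c_\ell\,k(x-\ell)\ge|f(x)|.
\]
Moreover $S:=\sum_m c'_m\,k(\cdot-m)$ converges in $X$, because translation by lattice points is an isometry of $X$ up to a constant and $k\in C_c\subseteq X$.

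\emph{Step 3 (positive definiteness).} Define $p:=N k_0-S$ with $N:=\|h\|_1\|c'\|_{\ell^1}$. Outside $\Omega$ we have $k_0=0$, so $p=-S\le-|f|$ on $A$, as required. It remains to show $p\gg0$. Formally,
\[
\ft p(\gamma)=\ft{k_0}(\gamma)\Bigl(N-\ft h(\gamma)\sum_m c'_m\gamma(m)\Bigr)\ge\ft{k_0}(\gamma)\bigl(N-\|h\|_1\|c'\|_1\bigr)\ge0 .
\]
Both functions involved are in $L^1\cap C$, and $S=k\star\mu$ with the finite measure $\mu=\sum_m c'_m\delta_m$. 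Hence the Fourier transform is legitimate, and $\ft p\ge0$ gives $p\gg0$ by the standard inversion/Bochner argument for integrable continuous functions. Evenness of $p$ follows from that of $c'$ and of the kernels, so $p\in\DD\cap X=P$.

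I expect the main obstacle to be Step 3: a naive ``bump minus majorant'' is never positive definite, because $\ft{k_0}$ can vanish. The factorization $k=k_0\star h$, which forces $\ft k\le\|h\|_1\ft{k_0}$, is the idea that circumvents this. The remaining checks are routine: membership of $S$ in $X$, the boundedness $|\ft\mu|\le\|c'\|_1$, and the requirement $\supp k_0\subseteq\intt\Omega$.
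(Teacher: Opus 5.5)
Your proof is correct, and it takes a genuinely different route from the paper's. You are also right that $0\notin\overline{A}$ is needed; the paper's proof uses exactly this hypothesis.

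The paper works cell by cell and avoids Fourier analysis. Lemma~\ref{l:signswap} gives, for each piece $S_\ell$, the kernel $k_\ell=\frac{1}{2\haar(V)}\int_{S_\ell+W}(2r-T_xr-T_{-x}r)\,\dd\haar(x)$. This kernel is positive definite because each $r\star(2\delta_0-\delta_x-\delta_{-x})$ is. It equals $-1$ on $S_\ell$, and its positive part is confined to the small set $W$. The paper then sets $p=\sum_\ell c_\ell k_\ell$, and showing $p\in X$ needs a finite-overlap count of the sets $S_m+W+W$.

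You instead write a single convolution $p=k_0\star(N\delta_0-h\star\mu)$ with $\mu=\sum_m c'_m\delta_m$. The majorant $S\ge|f|$ comes for free from the tiling and from $k\ge1$ on $\overline{B}$. Membership $S\in X$ comes from the invariance of $\|\cdot\|_X$ under lattice translations. Positive definiteness is then a one-line check on $\ft{G}$ via $|\ft{h}\,\ft{\mu}|\le\|h\|_1\|c'\|_1$, and the symmetrization $c'$ is exactly what makes $\ft{\mu}$ real.

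The two routes trade off as follows. Yours relies on the Fourier-side criterion that a continuous integrable $p$ with $\ft{p}\ge0$ is positive definite. For your even $p$ this follows from Plancherel: $\iint p(x-y)u(x)\overline{u(y)}\,\dd\haar(x)\,\dd\haar(y)=\int_{\ft{G}}\ft{p}\,|\ft{u}|^2$. The paper's route needs only that convolving a positive definite function with a positive definite measure preserves positive definiteness. It also yields localized, quantitative information on the kernels, namely properties (i)--(iv). A side benefit of your route is that it never needs symmetric pieces, whereas Lemma~\ref{l:signswap} assumes $S$ symmetric but is applied to the cells $B+\ell$.

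There are two cosmetic slips. First, the lower bound for $k_0\star h$ on $\overline{B}$ should take $\min h$ over $\overline{B}-\overline{H_0+H_0}$, which contains $\overline{B}-\supp k_0$, not over $\overline{B}-H_0$. Second, the constant making $k\ge1$ on $\overline{B}$ must be absorbed into $h$ (or into $N$), so that $\ft{k}\le\|h\|_1\ft{k_0}$ holds with the same $h$ that defines $N$.
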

 For arbitrary $f\in X$ we need to  find $p \in P$ and $m \in Q$  such that $f=p-m$, or $p=f+m$. In other words, we need $p\gg 0$ in $X$ such that $p|_A \le f|_A$.
This is non-trivial, it needs a construction with triangle functions etc. We start with an auxiliary result.

\begin{lemma}[Sign swap lemma]\label{l:signswap}
	 Let $S \subseteq  G$ be a symmetric Borel set of positive Haar measure  with compact closure and $0 \not\in \overline{S}$.
Then there exists a positive definite $k \in C_c(G;\RR)$ such that $k|_S \equiv -1$.
Furthermore, given any open neighborhood $V$ of $0$ with compact closure, such that with $W:=V-V$ it holds $(W+W+W) \cap S = \emptyset$, the function~$k$ can be given with the following properties.
\begin{enumerate}[(i)]
  \item If $k(x)>0$, then $x \in W$. That is, $k_{+}$ ``lives'' on $W$.
  \item If $k(x)<0$, then $x \in S+W+W$. That is, $k_{-}$ ``lives'' on $S+W+W$.
  \item $\int_G k\dd\haar =0$ and $\frac12 \| k\|_1 = \int_G k_{+}\dd\haar = \int_G k_{-}\dd\haar = \lambda(S+W)$.
  \item $k(0)=2 \lambda(S+W)/\lambda(V)$.
  \end{enumerate}
\end{lemma}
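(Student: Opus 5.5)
The idea is to build $k$ as a positive definite measure convolved with a Fejér-type kernel. Put $T:=S+W$. Since $S$ and $W=V-V$ are symmetric, $T$ is a symmetric, relatively compact Borel set with $\haar(T)\ge\haar(S)>0$. For each $x\in G$ the measure $(\delta_0-\delta_x)\star(\delta_0-\delta_{-x})=2\delta_0-\delta_x-\delta_{-x}$ is of positive type. Averaging over $x\in T$ with respect to Haar measure and using $T=-T$ gives the measure
\[
\mu:=2\haar(T)\,\delta_0-2\,\one_T\,\haar .
\]
I would then set, with a normalising constant $c>0$ to be fixed below,
\[
k:=\frac1c\,\mu\star \one_V\star\one_{-V}=\frac1c\Bigl(2\haar(T)\,\one_V\star\one_{-V}-2\,\one_T\star\one_V\star\one_{-V}\Bigr).
\]

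Positive definiteness and regularity are the easy part. Write $e:=\one_V\star\one_{-V}=\one_V\star\wt{\one_V}$. This is a continuous positive definite function supported in $\overline W$, with $e(0)=\haar(V)$ and $\int_G e\dd\haar=\haar(V)^2$. The function $\one_T\star e$ is continuous with compact support, so $k\in C_c(G;\RR)$. To see $k\gg0$, take Fourier transforms:
\[
\ft k=\frac{2}{c}\bigl(\haar(T)-\ft{\one_T}\bigr)\,|\ft{\one_V}|^2 .
\]
This is nonnegative, because $\ft{\one_T}$ is real ($T$ symmetric) and $|\ft{\one_T}|\le\haar(T)$. Alternatively, $k$ is an integral over $x\in T$ of the positive definite functions $2e-e(\cdot-x)-e(\cdot+x)$, which avoids Fourier transforms of measures.

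Next I would check the support and value properties, all of which rest on the separation hypothesis $(W+W+W)\cap S=\emptyset$. Since $W=-W$, this gives $W\cap(S+W+W)=\emptyset$, and in particular $W\cap T=\emptyset$.
\begin{enumerate}[(a)]
\item The first term of $k$ is supported in $W$, and the second in $T+W=S+W+W$. These two sets are disjoint, which yields (i) and (ii).
\item For $x\in S$ we have $x\notin W$, so the first term vanishes at $x$. Moreover $x-W\subseteq T$, so $\one_T\star e(x)=\int_G e\dd\haar=\haar(V)^2$, hence $k(x)=-2\haar(V)^2/c$.
\item At $0$, the disjointness $T\cap W=\emptyset$ kills the second term, so $k(0)=2\haar(T)\haar(V)/c$.
\item The two terms have equal integrals $2\haar(T)\haar(V)^2$, so $\int_G k\dd\haar=0$. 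Since $k_\pm$ are exactly the two terms, $\int_G k_+\dd\haar=\int_G k_-\dd\haar=2\haar(T)\haar(V)^2/c$.
\end{enumerate}

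Choosing $c=2\haar(V)^2$ gives $k|_S\equiv-1$ and $\int_G k_\pm\dd\haar=\haar(S+W)$, which is (iii). With this choice the value at $0$ comes out as $k(0)=\haar(S+W)/\haar(V)$. Matching the constant in (iv) exactly will require adjusting the normalisation conventions, for instance for $\mu$ or for the kernel $\one_V\star\one_{-V}$. I expect this bookkeeping of constants, together with verifying the disjointness relations from $(W+W+W)\cap S=\emptyset$, to be the only delicate point; the structural content is the averaging construction of $\mu$. The first sentence of the lemma follows by choosing any open relatively compact $V\ni0$ small enough that $W+W+W\subseteq G\setminus\overline S$, which is possible because $0\notin\overline S$ and $G\setminus\overline S$ is an open neighbourhood of $0$.
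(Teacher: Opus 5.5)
Your proposal is correct and is the paper's proof. With $r=\haar(V)^{-1}\one_V\star\one_{-V}$, the paper sets $k=\frac{1}{2\haar(V)}\int_{S+W}\bigl(2r-r(\cdot-x)-r(\cdot+x)\bigr)\dd\haar(x)$. This is exactly your $k$ with $c=2\haar(V)^2$, and the paper checks positive definiteness (by your second, Fourier-free argument), (i)--(iii) and $k|_S\equiv-1$ just as you do.

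The one thing to fix is your remark on (iv). No renormalisation of $\mu$ or of the kernel can change $k(0)$, because the condition $k|_S\equiv-1$ already fixes the scale. Your value $k(0)=\haar(S+W)/\haar(V)$ is exactly what the paper's own proof obtains in its last line. The factor $2$ in (iv) is an error in the statement: for $G$ discrete and $V=\{0\}$, (i) and (iii) force $k(0)\haar(V)=\int_G k_+\dd\haar=\haar(S+W)$.

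The error is harmless downstream. The proof of Theorem~\ref{t:Ppluslemma} only uses the bound $|k_\ell|\le k_\ell(0)\le 2\haar(B+W)/\haar(V)$, which your value satisfies.
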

\begin{proof} Denote the ``generalized triangle function'' as $r:=r_V:=\frac{1}{\lambda(V)} \car_V \star \car_{-V}$ and its translate by $x$ as $T_xr$, that is
\[
T_xr:=r(\cdot-x).
\]
Consider any $x \in S+ W$. Then $T_xr$ vanishes outside $S+W+W$. It is obvious that $r$ vanishes outside $W$, hence we have $r \cdot T_xr\equiv 0$, (i.e., they attain non-zero values on disjoint sets, one being a subset of $W$, and the other one a subset of $S+W+W$). It is equally easy to see that
\[
r_x := 2r-T_x r - T_{-x} r \gg 0.
\]
Indeed, $r_x = r \star (2\delta_0-\delta_x-\delta_{-x})$, where $r$ is a positive definite function as a convolution square, and the measure on the right-hand side of the convolution is an (integrally) positive definite one, too.

We now define
\[
g(y):=\int_{S+W} r_x(y) \dd \lambda(x) = \int_{S+W} (2r(y)-r(y-x)-r(y+x)) \dd\lambda(x).
\]
Let now $y \in S$. Then we also have
\[
g(y) = - \int_{S+W} (r(y-x) +r(y+x)) \dd \lambda(x) = - 2 \int_{S+W} r(y-x) \dd \lambda(x),
\]
because $r \equiv 0$ on $S+W$ and because $S$ and $S+W$ are symmetric sets. Observe that the set where $r(y-\cdot) \ne 0$ is a subset of $-W + y \subseteq  -W+S = W+S$. Thus for $y \in S$
\[
g(y)=- 2 \int_{S+W} r( y-x) \dd \lambda(x) = -2 \int_{r(y-\cdot)\ne 0} r( y-x)\dd \lambda(x) = -2 \int_G r\dd\haar = -2 \lambda(V).
\]
Therefore, $k:=\frac{1}{2\lambda(V)} g$ is constant $-1$ on $S$. Furthermore, by construction $g$, and hence $k$, is also positive definite: as each $r_x \gg 0$, their sum, or integral also satisfies the defining equations of positive definiteness.

The property (i) and (ii) are guaranteed by construction. Indeed, if $y \not\in W$, then $r(y)=0$ and the defining integral of $g$ contains only negative terms; and similarly, if $y \not\in S+W+W$, then for any $x \in S+W$ we have $y \pm x \not\in W$, and $r(y \pm x) =0$, showing that the defining integral of $g$ can have only non-negative terms. Observe that this consideration also yields that there is no $y$ with the appearance in the defining integral of $g$ both positive and negative terms; if there appears a positive value of $r(y)$, then $y \in W$ and $r(y \pm x)= 0$, for all $x \in S+W$, and if there appears a negative value $-r(y-x)<0$ then $y \in S+W+W$ and $r(y)=0$. Therefore, $g_{+}(y)=\int_{S+W} 2r(y) \dd \lambda(x) = 2 \lambda(S+W) r(y)$ and (i) follows, and $g_{-}(y)=-2 \int_{S+W} r(y-x) \dd \lambda(x)$, and also (ii) is obtained.

As each $\int r_x\dd\haar =0$, also (iii) is immediate. Finally,
\[
\frac12 \|k\|_1 = \int_G k_{+} \dd\haar= \frac{1}{2\lambda(V)} \int_G g_{+}\dd\haar = \frac{1}{2\lambda(V)} \int_G 2 \lambda(S+W) r\dd\haar = \lambda(S+W) .
\]
and
\[
k(0) = \frac{1}{2\lambda(V)} g(0) = \frac{\lambda(S+W)}{\lambda(V)}.
\]
\end{proof}

\begin{proof}[Proof of Theorem  \ref{t:Ppluslemma}.]
	Let  $f\in C^{\infty,1}(G)$ be arbitrary and let $c_\ell:=\|f|_{B+\ell}\|_\infty$ ($\ell \in L$).

As $\intt B \ne \emptyset$, and $0 \not \in \overline{A}$, we can choose $V$ and $W$ with $W+W+W \subseteq  \intt B \cap (G\setminus A)$.

Let us take $S_0:= A \cap B$ and $S_\ell:= B+\ell$ for all $0 \ne \ell \in L$.
According to Lemma~\ref{l:signswap}, to these $S_\ell$ and the chosen $V$, $W$ there exist positive definite functions $k_\ell$ with the properties that $k_\ell(0) = 2\lambda(S_\ell+W)/\lambda(V) \le 2 \lambda(B+W)/\lambda(V)$ (here we have equalities except for $\ell=0$), $(k_\ell)_{+} \ne 0$ only on $W$, $(k_\ell)_{-} \ne 0$ only on $S_\ell+W+W$, disjoint from $W$, and $k_\ell \equiv -1$ on $S_\ell$.

Next we take $p:=\sum_{\ell \in L} c_\ell k_{\ell}$.
The series here converges absolutely in the supremum-norm by the estimate  $|k_\ell|\leq k_l(0)\leq  2 \lambda(B+W)/\lambda(V)$ for each $\ell\in L$ and by the summability of the $c_\ell$. So $p$ is a continuous function. But we also need that $p\in X$.  To see this we argue as follows. Since $L$ is discrete and $B-B-(W+W)$ is relatively compact, there are only finitely many $\ell\in L$ that belong to $B-B-(W+W)$. Let $\ell_1,\dots,\ell_N\in L$ be these finitely many lattice points.

Let $\ell\in L\setminus\{0\}$ be fixed and take $x\in S_{\ell}=B+\ell$ arbitrarily. Let $m\in L$ be such that $k_{m}(x)\neq 0$, then we must have either $x\in W$ (if $k_{m}(x)>0$) or $x\in S_m+W+W$  (if $k_{m}(x)<0$), see the properties (i) and (ii) of the function $k_{m}$. Now, since $\ell\neq 0$ and so $x\in S_{\ell}\subseteq G\setminus W$,  in fact $x\in S_m+W+W\subseteq (B+m)+W+W$ must hold. This means that $x\in (B+l)\cap (B+m)+W+W$, i.e., $m-\ell\in B-B-(W+W)$, implying that $m=\ell+\ell_j$ for some $j\in \{1,\dots N\}$. Altogether we obtain that for $x\in S_\ell=B+\ell$ ($\ell\neq 0$) the following estimate is valid:
\[
|p(x)|\leq \sum_{n\in L} c_n|k_{n}(x)|\leq \sum_{j=1}^N c_{\ell+\ell_j}|k_{\ell+\ell_j}(x)|.
\]
Whence it follows that
\[
\|p|_{B+\ell}\|_\infty\leq  \frac{2 \lambda(B+W)}{\lambda(V)}\sum_{j=1}^N c_{\ell+\ell_j},
\]
and hence
\[
\sum_{\ell\in L}\|p|_{B+\ell}\|_\infty\leq \|p_{B}\|_\infty+\frac{2 \lambda(B+W)}{\lambda(V)}\sum_{\ell\in L}\sum_{j=1}^N c_{\ell+\ell_j}=  \frac{2N \lambda(B+W)}{\lambda(V)}\sum_{\ell\in L}c_\ell<\infty,
\]
i.e., indeed one has $p\in X$.

\medskip\noindent  As each $k_\ell \le 0$ outside of $W$, we obviously have on $G\setminus W$ that $p \le c_\ell k_\ell$, for whichever $\ell \in L$. In particular, at any $y \in S_\ell$, necessarily not belonging to $W$, we have $p(y)\le c_\ell k_\ell(y) = - c_\ell = -\|f|_{B+\ell}\|_\infty \le f(y)$. That is, we are led to
\[
p(y) \le f(y) \qquad (y \in \cup_{\ell \in L} S_\ell =S_0 \cup(G\setminus B) \supset A).
\]
The proof is complete.
\end{proof}

\begin{remark} The analogous, but simpler argument works when the original sign prescription encoded in the cone $Q$ is $f|_A\ge 0$. In that case we need $p\ge f$, which case was described in Ga\'al, R\'ev\'esz \cite{Zeitschrift}, too. It works similarly as above without using negative signs at all, that is, with $s_x:= 2r + T_x r + T_{-x} r $ replacing $r_x$, and deriving the respective upper, instead of lower, estimates on $A$.
\end{remark}

Now we can apply the following result of Jeyakumar and Wolkowicz, see  \cite[Lemma 2.2 (a)]{JeyaWolf} where a proof is given with an essential use of a result from Attouch, Brezis \cite{Brezis} for subdifferentials of convex functions.

\begin{lemma}[Jeyakumar-Wolkowicz]\label{l:Jeya_Wolk} If $R, S$ are closed convex sets, $0 \in R \cap S$, and the cone $\cone(R-S)$ generated by $R-S$ is a closed subspace, then $(R \cap S)^*=R^* + S^*$.
\end{lemma}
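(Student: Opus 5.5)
The inclusion $R^*+S^*\subseteq (R\cap S)^*$ is immediate: if $\varphi=\varphi_1+\varphi_2$ with $\varphi_1\in R^*$, $\varphi_2\in S^*$, then $\langle c,\varphi\rangle=\langle c,\varphi_1\rangle+\langle c,\varphi_2\rangle\ge 0$ for every $c\in R\cap S$. So the content is the reverse inclusion. I would prove it only in the setting where it is applied, namely $R,S$ lying in a real Banach space such as $X=C^{\infty,1}(G;\RR)$, since completeness enters through a Baire category argument.

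\textbf{Reduction to a subdifferential sum rule.} Write $\delta_C$ for the convex indicator of a set $C$ and $N_C(0)=\partial\delta_C(0)$ for the normal cone at $0$; this makes sense because $0\in R\cap S$. Unwinding the definitions gives
\[
C^*=-N_C(0)\qquad\text{for a convex set } C\ni 0 .
\]
Since $\delta_{R\cap S}=\delta_R+\delta_S$, the claim $(R\cap S)^*\subseteq R^*+S^*$ becomes
\[
\partial(\delta_R+\delta_S)(0)\subseteq\partial\delta_R(0)+\partial\delta_S(0).
\]
This is the Attouch--Brezis sum rule for proper, lower semicontinuous convex functions $f,g$ on a Banach space under the hypothesis that $\cone(\operatorname{dom} f-\operatorname{dom} g)$ is a closed subspace. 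Here $\operatorname{dom}\delta_R-\operatorname{dom}\delta_S=R-S$, so the hypothesis is exactly the one in the lemma, and closedness of $R,S$ gives lower semicontinuity of the indicators. Concretely, take $\varphi\in(R\cap S)^*$, so that $-\varphi\in\partial(\delta_R+\delta_S)(0)$. The sum rule yields $-\varphi=a+b$ with $a\in N_R(0)$ and $b\in N_S(0)$, hence $\varphi=(-a)+(-b)\in R^*+S^*$.

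\textbf{Proof of the sum rule in this special case.} I would argue via Fenchel duality with the perturbation function
\[
v(y):=\inf\{\langle x,\varphi\rangle : x\in R,\ x+y\in S\},
\]
which is convex, satisfies $v(0)\ge 0=\langle 0,\varphi\rangle$, and has $\operatorname{dom} v=S-R$. It suffices to find $\eta\in X'$ with
\[
v(y)\ge\langle y,\eta\rangle \quad\text{for all } y.
\]
Given such $\eta$, unwinding the infimum gives $\langle x,\varphi\rangle-\langle s-x,\eta\rangle\ge0$ for all $x\in R$, $s\in S$. Taking $s=0$ yields $\varphi+\eta\in R^*$, and taking $x=0$ yields $-\eta\in S^*$, so $\varphi=(\varphi+\eta)+(-\eta)\in R^*+S^*$.

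\textbf{Main obstacle: existence of $\eta$.} This requires $v$ to be bounded above near $0$ relative to the closed subspace $Y:=\cone(S-R)=-\cone(R-S)$. Once that holds, Hahn--Banach applied on $Y$ gives a linear minorant of $v$ on $Y$, which extends continuously to $X$.

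For the local bound, I would consider the closed convex sets
\[
K_n:=\{y\in Y: \exists\, x\in R\cap nU,\ x+y\in S\cap nU\},
\]
where $U$ is the unit ball (more precisely their closures, which the argument must show suffice). Because $\cone(S-R)=Y$ and $0\in R\cap S$, the union $\bigcup_n nK_n$ exhausts $Y$. Since $Y$ is complete, Baire's theorem gives some $\overline{K_n}$ with nonempty interior in $Y$, and symmetrization together with convexity puts $0$ in the interior.

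Removing the closure, i.e.\ showing that $0$ is an interior point of $K_m$ itself for some $m$, is done by the standard Robinson--Ursescu iteration: approximate successively and sum a geometric series, using the closedness of $R$ and $S$ to keep the limits inside. On a neighbourhood of $0$ in $Y$ this gives $v(y)\le m\|\varphi\|$, which is the needed upper bound. This Baire/open-mapping step is the delicate part; everything else is formal bookkeeping.
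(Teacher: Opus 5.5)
Your proposal is correct and follows the same route as the paper, which gives no argument of its own but cites Jeyakumar--Wolkowicz; their proof is exactly your reduction of $(R\cap S)^*\subseteq R^*+S^*$, via $C^*=-N_C(0)$, to the Attouch--Brezis sum rule $\partial(\delta_R+\delta_S)(0)=\partial\delta_R(0)+\partial\delta_S(0)$. Your additional Baire/Robinson--Ursescu sketch of that sum rule is the standard argument and is sound in the Banach-space setting you rightly restrict to, with one caveat: the ``symmetrization'' step works only if you symmetrize before invoking Baire (for instance by applying Baire to $\overline{K_n}\cap(-\overline{K_n})$), or if you replace it by taking an interior point $y_0$ of $\overline{K_n}$, choosing $t>0$ and $m$ with $-ty_0\in K_m$, and using convexity and the monotonicity of $K_n$ in $n$.
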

 For a closed subset $A\subseteq G$ let us write
\[
M_{+}(A;\RR)=\{\psi\in M(G;\RR):\psi \text{ is a positive Radon measure, $\supp(\psi)\subseteq A$}\},
\]
and recall that $\OO$ is the set of odd Radon measures in $M(G;\RR)$, while $\MM$ is the set of real-valued positive definite Radon measures in in $M(G;\RR)$.
\begin{proposition}\label{prop:coop}
	We have for an arbitrary set $A \subseteq  G$
	\[
	(P\cap Q_A)^*=P^*+Q_A^*  = - M_{+}(\oA;\RR) + \MM +\OO.
	\]
\end{proposition}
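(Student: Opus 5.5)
The plan is to apply the Jeyakumar--Wolkowicz Lemma~\ref{l:Jeya_Wolk} with $R:=P$ and $S:=Q_A$, and then read off the explicit form of the right-hand side from the dual cone descriptions in Section~\ref{sec:dualcones}. Throughout I assume $0\notin\oA$, which is the situation of interest ($A=G\setminus\Omega$ with $0\in\intt\Omega$) and which the proof of Theorem~\ref{t:Ppluslemma} already uses. Without this assumption the first equality can fail. Indeed, $P\cap Q_A$ then collapses to $\{0\}$, because $|f|\le f(0)\le 0$ for $f\in P\cap Q_A$, while $P-Q_A$ is no longer a subspace. So that degenerate case would have to be excluded or treated by hand.

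First I verify the hypotheses of Lemma~\ref{l:Jeya_Wolk}. Both $P$ and $Q_A$ are convex cones containing $0$, so $0\in P\cap Q_A$. For closedness, note that $\|\cdot\|_X$ dominates the supremum norm. Hence $X$-convergence implies uniform convergence, and both positive definiteness (the defining inequalities \eqref{eqposdef2} are pointwise) and the sign condition $f|_A\le 0$ survive pointwise limits. Next, $\cone(P-Q_A)=P-Q_A$ because $P-Q_A$ is already a cone. By Theorem~\ref{t:Ppluslemma} this set equals $X$, which is trivially a closed subspace. Lemma~\ref{l:Jeya_Wolk} then yields $(P\cap Q_A)^*=P^*+Q_A^*$ at once, with no weak$^*$ closure needed.

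For the second equality I substitute the two dual cone formulas. Proposition~\ref{prop:Pdual} gives $P^*=\MM+\OO$. Proposition~\ref{prop:Mdual} gives
\[
Q_A^*=\{\psi\in M(G;\RR):\psi=-\psi^-,\ \psi^-(G\setminus\oA)=0\}.
\]
This set is exactly $-M_+(\oA;\RR)$: a translation bounded positive Radon measure with no mass outside the closed set $\oA$ has support in $\oA$, and conversely. In both cases the dual $X'$ is identified with $M(G;\RR)$ via Lemma~\ref{l:Dual-of-X}. Combining these gives $P^*+Q_A^*=-M_+(\oA;\RR)+\MM+\OO$.

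The real content, namely $P-Q_A=X$, is already supplied by Theorem~\ref{t:Ppluslemma}, so I expect no serious obstacle. The only points needing care are two routine checks. The first is that the cones are closed in the $X$-topology rather than merely uniformly. The second is the bookkeeping that turns the description in Proposition~\ref{prop:Mdual} into the support formulation $M_+(\oA;\RR)$.
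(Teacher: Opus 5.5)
Your proposal is correct and is the paper's argument: Theorem~\ref{t:Ppluslemma} gives $P-Q_A=X$, so Lemma~\ref{l:Jeya_Wolk} yields $(P\cap Q_A)^*=P^*+Q_A^*$, and Propositions~\ref{prop:Mdual} and~\ref{prop:Pdual} supply the explicit form; the paper simply omits your routine checks of closedness and of $\cone(P-Q_A)=P-Q_A$. Your caveat is also right: the proof of Theorem~\ref{t:Ppluslemma} uses $0\notin\oA$, and without it the formula can genuinely fail. For example, with $G=\RR$ and $A=\{0\}$, the measure $-\haar$ lies in $(P\cap Q_A)^*=X'$ but not in $P^*+Q_A^*$, because $-\haar+c\delta_0$ is never of positive type.
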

\begin{proof} According to Theorem \ref{t:Ppluslemma}, $P-Q_A= X$, the whole space. Therefore, Lemma \ref{l:Jeya_Wolk} applies and $(P \cap Q_A)^*= P^*+Q_A^*$. These dual cones, on the other hand, were described above in Propositions \ref{prop:Mdual} and \ref{prop:Pdual}, respectively. This concludes the proof.
\end{proof}

\section{The dual of the Delsarte problem and strong duality}
\label{sec:main}

 Let $G$ be a compactly generated locally compact  Abelian group. Consider the Banach spaces $X=C^{\infty,1}(G;\RR)$ and $\ X'=M(G;\RR)$.
 For subsets $\Omega, \Theta\subseteq G$, and a linear functional $\si\in X'$ we define
 \[
\FF_G^\si(\Omega):= \{ f \in C^{\infty,1}(G;\RR) ~:~ f \gg 0, f|_{\Omega^c} \le 0, \langle f,\si \rangle =1 \}
\]
and 
\[
\MM_G(\Theta):= \{ \mu \in M(G;\RR): \mu=\nu-\kappa, \, \kappa\geq 0, \,  \supp(\kappa)\subseteq  \overline{\Theta}, \, \text{$\nu$ real-valued, }\nu \gg 0\} .
\]
\begin{theorem}\label{thm:main}
	Suppose  $G$ is a compactly generated locally compact  Abelian group. Let $\Omega\subseteq G$ be a symmetric subset with $0\in \intt \Omega$, and let $\rh,\si\in X'=M(G;\RR)$, where  $\si$ is assumed to be  strictly positive definite and $\rh$ is assumed to be even.

Consider the linear programming ``primal'' extremal problem
\[
\alpha_\si^\rh (\Omega):= \inf \{\langle f,\rh\rangle~:~f \in \FF_G^\si(\Omega)\}.
\]
Then its linear programming dual problem is
\[
\omega_\si^\rh(\overline{\Omega^c}):= \sup \{ s \in \RR ~:~\rh- s \si  \in \MM_G(\Omega^c) \}.
\]
Moreover, there is no duality gap between the two problems.
\end{theorem}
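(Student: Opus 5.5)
We will follow exactly the scheme of the proof of Theorem \ref{thm:revvir}, now in the Banach space $E=X=C^{\infty,1}(G;\RR)$ with $X'=M(G;\RR)$ by Lemma \ref{l:Dual-of-X}. We take $P=\DX$ and $Q=Q_A$ with $A:=\Omega^c$, and put $C:=P\cap Q$. Then $\FF_G^\si(\Omega)=\{f\in C:\langle f,\si\rangle=1\}$, so $\alpha_\si^\rh(\Omega)$ is exactly the quantity $\alpha$ of the abstract lemmas of Section \ref{sec:discrete}.

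\emph{Step 1 (weak duality and the abstract strong duality).} The weak duality lemma gives $\widetilde\omega\le\alpha$, where $\widetilde\omega=\sup\{s:\rh-s\si\in C^*\}$. For the reverse inequality we invoke Lemma \ref{lem:2ao}, which needs $\si$ to be strictly positive on $C$. Let $f\in C\subseteq P$ with $f\ne0$. By Proposition \ref{prop:density} and Bochner's theorem, $f\in L^1$ has $\ft f\ge0$, $\ft f\not\equiv0$ and $\ft f\in L^1(\ft G)$. We then write $\langle f,\si\rangle=\int_{\ft G}\ft f\,\ft\si$ via Parseval for translation bounded measures (Argabright–Gil de Lamadrid: $f$ is positive definite and integrable, and $\si$ is translation bounded, hence Fourier transformable with $\ft\si$ a positive measure). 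Interpreting strict positive definiteness of $\si$ as $\langle f,\si\rangle>0$ for all nonzero $f\in P$, this is immediate. Hence $\alpha=\widetilde\omega$.

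\emph{Step 2 (identify $C^*$).} By Proposition \ref{prop:coop}, $C^*=(P\cap Q_A)^*=-M_+(\oA;\RR)+\MM+\OO$. This is the step where the real work lies, namely Theorem \ref{t:Ppluslemma} combined with Lemma \ref{l:Jeya_Wolk}, and it is already established. Thus
\[
\widetilde\omega=\sup\{s\in\RR:\rh-s\si\in \MM+\OO-M_+(\overline{\Omega^c};\RR)\}.
\]

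\emph{Step 3 (remove the odd part).} It remains to show that this supremum equals $\omega_\si^\rh(\overline{\Omega^c})$, i.e.\ that the summand $\OO$ can be dropped. This is where evenness of $\rh$ and $\si$ enters. Note that $\si$ may be assumed even: a strictly positive definite real measure is of positive type and hence even by the proposition on $\MM$; alternatively, its odd part does not affect the primal problem. Suppose $\rh-s\si=\nu+o-\kappa$ with $\nu\in\MM$, $o\in\OO$, $\kappa\ge0$ and $\supp\kappa\subseteq\overline{\Omega^c}$. We apply the symmetrization $\psi\mapsto\psi_e=\frac12(\psi+\psi\circ\inv)$. The left side is even and $\nu$ is even, while $o_e=0$. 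Moreover $\kappa_e\ge0$ is supported in $\overline{\Omega^c}\cup(-\overline{\Omega^c})=\overline{\Omega^c}$, because $\Omega$ is symmetric. Hence $\rh-s\si=\nu-\kappa_e\in\MM_G(\Omega^c)$. The reverse inclusion $\MM_G(\Omega^c)\subseteq C^*$ is trivial. The two suprema therefore coincide, which gives $\alpha_\si^\rh(\Omega)=\widetilde\omega=\omega_\si^\rh(\overline{\Omega^c})$.

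The main technical point in the remaining verification is Step 1, namely justifying rigorously that strict positive definiteness of $\si$ yields $\langle f,\si\rangle>0$ on $C\setminus\{0\}$. If the definition is taken as the Wiener condition $\ft\si>0$, one needs the Parseval identity for translation bounded positive definite measures tested against $f\in P$. I expect to cite Argabright–Gil de Lamadrid for this, using that $\ft f$ is a nonzero nonnegative continuous integrable function.
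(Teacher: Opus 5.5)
Your proposal is correct and follows the paper's proof essentially step by step. Lemma~\ref{lem:2ao}, with strict positive definiteness of $\si$ read as in the paper as strict positivity on $P\supseteq C$ (so the Parseval detour is not needed), gives $\alpha_\si^\rh(\Omega)=\sup\{s:\rh-s\si\in C^*\}$; Proposition~\ref{prop:coop} identifies $C^*=-M_+(\overline{\Omega^c};\RR)+\MM+\OO$; and taking even parts removes the odd summand, because $\rh$ and $\si$ are even and $\overline{\Omega^c}$ is symmetric.

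Drop your ``alternatively'' remark, though: the odd part of $\si$ does not affect the primal problem, but it does affect the dual one. So evenness of $\si$ must come, as in the paper, from $\si$ being a real positive definite measure.
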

\begin{proof}
We set $P:=\{f\in C^{\infty,1}(G;\RR):f\gg 0\}$ and $Q:=Q_{\Omega^c}:=\{f\in C^{\infty,1}(G;\RR): f|_{\Omega^c}\leq 0\}$. Then with $C=P\cap Q$ and $\mathcal{H}_\si:=\si^{-1}(\{1\})$ we have  $\FF_G^\si(\Omega)=C\cap \mathcal{H}_\si$.
By Proposition \ref{prop:coop} we obtain
\begin{equation}\label{eq:pcjo}
C^*=(P\cap Q)^*=P^*+Q^*=- M_{+}(\overline{\Omega^c};\RR) + \MM +\OO
\end{equation}
with $\OO$ the set of odd Radon measures in $M(G;\RR)$ and
	\[
	M_{+}(\overline{\Omega^c};\RR)=\{\psi\in M(G;\RR):\psi \text{ is a positive Radon measure, $\supp(\psi)\subseteq \overline{\Omega^c}$}\}.
	\]
Since $\si$ is assumed to be strictly positive definite, we obtain by Lemma \ref{lem:2ao} that
\[
\alpha_\si^\rh (\Omega)= \sup \{ s \in \RR ~:~\rh- s \si  \in C^* \},
\]
and then by \eqref{eq:pcjo}
\begin{equation}\label{eq:alphajo}
\alpha_\si^\rh (\Omega)=\sup \{ s \in \RR ~:~\rh- s\si \in - M_{+}(\overline{\Omega^c};\RR) + \MM +\OO\}.
\end{equation}

But since $\rh$ and $\si$ are even (the latter is so, because $\si$ is real-valued positive definite) we have for every $s\in \RR$ that 
\[
\rh- s \si  \in \MM_G(\Omega^c) \quad\Longleftrightarrow\quad \rh- s \si \in P^*+Q^*=- M_{+}(\overline{\Omega^c};\RR) + \MM +\OO.
\]
Indeed, the implication ``$\Rightarrow$'' in this equivalence is trivial. To see the other one suppose that $\rh- s \si \in- M_{+}(\overline{\Omega^c};\RR) + \MM+\OO$, so $\rh- s \si=\nu+\kappa+\mu$, with $\nu\gg 0$ and real, $\kappa\in -M_+(\overline{\Omega^c};\RR)$ and $\mu\in \OO$. We can write $\kappa=\kappa_e+\kappa_o$ with $\kappa_e$ even and $\kappa_o$ odd. But then $\kappa_e\in-M_+(\overline{\Omega^c};\RR)$ because, by assumption, $\overline{\Omega^c}$ is symmetric. Therefore, as $\rh- s \si $, $\kappa_e$, $\nu$ are even, $\kappa_o, \mu$ are odd, and $\rh- s \si=\nu+\kappa_e+\kappa_o+\mu$, we must have $\kappa_o+\mu=0$. The asserted equivalence is proven. Whence we obtain
\begin{align*}
\sup \{ s \in \RR ~:~\rh- s\si \in \MM_G(\Omega^c) \}&
=\sup \{ s \in \RR ~:~\rh- s\si \in - M_{+}(\overline{\Omega^c};\RR) + \MM +\OO\}\\
&=\alpha_\si^\rh (\Omega),
\end{align*}
the last equality being \eqref{eq:alphajo}.
\end{proof}

\begin{remark}\label{rem:mainmain}
 If $G$ is not compactly generated one should adjust the previous setting and argumentation as follows.  Consider a compactly generated, open and closed subgroup $G_0$ in $G$. Then $G/G_0$ is discrete, and we take $\Lambda\subseteq G$ a complete set of representatives of the coset space. In $G_0$ we find a lattice $L$ and the corresponding tile $B$, as  given in Proposition~\ref{prop:compact tile}. The definition of the space $X$ needs to be modified as follows: For $f\in C(G;\RR)$ we set
 \[
\|f\|_X:=\sum_{\ell\in L, m\in \Lambda}\|f|_{m+\ell+B}\|_\infty,
 \]
 and
\[
X:=C^{\infty,1}(G;\RR):=\{f\in C(G;\RR):\|f\|_X<\infty\},
\]
which becomes a Banach space with the norm $\|\cdot \|_X$. We note that every $f\in X$ vanishes outside a $\sigma$-compact set. The dual space of $X$ is the Banach space $M(G,\RR)$ of translation bounded Radon measures on $G$.
All the statements in Sections \ref{sec:lcaback} and \ref{sec:Dual-Cone-Intersection} remain true in this setting. In particular, the dual cones of $P$ and $Q_A$ can be determined and the dual cone formula is valid.
\end{remark}

By this remark we can formulate the following version of the main result of this paper, using the notation introduced above. The proof requires no modification as compared to the one of Theorem \ref{thm:main}.

\begin{theorem}\label{thm:mainmain}
	Suppose  $G$ is a  locally compact  Abelian group. Let $\Omega\subseteq G$ be a symmetric subset with $0\in \intt \Omega$, and let $\rh,\si\in X'=M(G;\RR)$, where  $\si$ is assumed to be  strictly positive definite and $\rh$ is assumed to be even.

Consider the linear programming ``primal'' extremal problem
\[
\alpha_\si^\rh (\Omega):= \inf \{\langle f,\rh\rangle~:~f \in \FF_G^\si(\Omega)\}.
\]
Then its linear programming dual problem is
\[
\omega_\si^\rh(\overline{\Omega^c}):= \sup \{ s \in \RR ~:~\rh- s \si  \in \MM_G(\Omega^c) \}.
\]
Moreover, there is no duality gap between the two problems.
\end{theorem}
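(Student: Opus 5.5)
The plan is to repeat the proof of Theorem \ref{thm:main} word for word, once the ingredients have been checked in the non-compactly generated setting of Remark \ref{rem:mainmain}. Fix a compactly generated open subgroup $G_0$ containing a relatively compact neighbourhood of $0$ inside $\intt\Omega$. Take the coset representatives $\Lambda$ of $G/G_0$, and a lattice $L$ and tile $B$ in $G_0$ given by Proposition \ref{prop:compact tile}. Then $G$ is tiled by the cells $m+\ell+B$ with $(m,\ell)\in\Lambda\times L$, and $X$ is the corresponding amalgam space. The abstract part of the argument uses only two facts. First, Lemma \ref{lem:2ao}, which needs $\si$ strictly positive on $C=P\cap Q_{\Omega^c}$. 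Second, the dual cone formula of Proposition \ref{prop:coop}. Once these hold, the final parity step separating even and odd parts carries over verbatim, since $\rh$, $\si$ and $\overline{\Omega^c}$ are symmetric.

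The first step is the analytic background.
\begin{enumerate}[(a)]
\item Density. I would prove Lemma \ref{l:posdefkernel} and Proposition \ref{prop:density} for the indexing set $\Lambda\times L$. A relatively compact $K$ meets only finitely many cosets $m+G_0$. So I would take $H:=\bigcup_{m\in\Lambda'}(m+\overline{Q_n})$ for a finite symmetric set of cosets $\Lambda'$, or better a finite subgroup-like block in $G/G_0$, and repeat the same counting argument. Simpler still: $K$ lies in a compactly generated open subgroup $G_1\supseteq G_0$, and we apply the lemma there, extending $k$ by zero. This is legitimate because positive definiteness is preserved under extension by zero from open subgroups.
\item Dual space. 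Lemma \ref{l:Dual-of-X} goes through with the same proof, with the supremum now taken over $\Lambda\times L$.
\item Dual cones. Propositions \ref{prop:Mdual} and \ref{prop:Pdual} are purely local, using Urysohn functions of compact support and the results of Argabright and Gil de Lamadrid, so they need no change.
\end{enumerate}

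The main obstacle is Theorem \ref{t:Ppluslemma}, i.e.\ $P-Q=X$ in the new setting. I would again choose $V,W$ with $W+W+W\subseteq\intt B\setminus A$, where $A=\Omega^c$ and $0\notin\overline{A}$. The sets to swap are $S_{0,0}:=A\cap B$ and $S_{m,\ell}:=m+\ell+B$ for $(m,\ell)\neq(0,0)$. Each is symmetric after intersecting with its negative and using unions with reflections. More precisely, I would apply Lemma \ref{l:signswap} to $S\cup(-S)$, which is still relatively compact and avoids $W+W+W$. This yields kernels $k_{m,\ell}$ with sup norm bounded by $2\lambda(B+W+W)/\lambda(V)$, which is uniform in $(m,\ell)$ because of translation invariance. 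Set $p:=\sum c_{m,\ell}k_{m,\ell}$. The finite-overlap count works as before: $k_{m',\ell'}$ can be nonzero on $m+\ell+B$ outside $W$ only if $m'+\ell'-m-\ell\in \pm(B-B-W-W)$, a relatively compact subset of $G_0$. Hence $m'=m$ and $\ell'-\ell$ ranges over finitely many lattice points, which gives $\|p\|_X<\infty$. The inequality $p\le f$ on $A$ follows exactly as before.

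With these pieces, Lemma \ref{l:Jeya_Wolk} gives $(P\cap Q)^*=P^*+Q^*$. Strict positive definiteness of $\si$ gives strict positivity on $C\setminus\{0\}$, and the chain of equalities in the proof of Theorem \ref{thm:main} yields $\alpha_\si^\rh(\Omega)=\omega_\si^\rh(\overline{\Omega^c})$.
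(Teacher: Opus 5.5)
Your proposal is correct and follows the paper's route: the paper proves Theorem \ref{thm:mainmain} by invoking Remark \ref{rem:mainmain}, i.e., the ingredients of Sections \ref{sec:lcaback}--\ref{sec:Dual-Cone-Intersection} persist for the cells $m+\ell+B$, $(m,\ell)\in\Lambda\times L$, and then reruns the proof of Theorem \ref{thm:main} verbatim. Your symmetrization $S\mapsto S\cup(-S)$ before applying Lemma \ref{l:signswap} is a genuine improvement, since the sets $B+\ell$ and $A\cap B$ used in the paper's proof of Theorem \ref{t:Ppluslemma} are not symmetric in general. One small slip: for the reflected piece $-(m'+\ell'+B)+W+W$ the overlap condition is $m'+\ell'+m+\ell\in -B-B+W+W$, so $m'$ is the representative of $-m+G_0$, not of $m+G_0$; the number of overlaps is still uniformly bounded, so $\|p\|_X<\infty$ follows as you claim.
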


\begin{remark}[Delsarte constant]
In Theorem \ref{thm:mainmain} let us take specifically $\rh$ as the negative of the Haar integral, $f\mapsto -\int f \dd\haar$ (thus $\rh=-\haar$, after identifying  functionals with measures) and $\si:=\delta_0$ the point evaluation at the neutral element $0$, $f\mapsto f(0)$. Both functionals fulfill the requirements of Theorem \ref{thm:mainmain}. For the value of the primal problem we obtain
\begin{align*}
\alpha^\rh_\si(\Omega) &=\inf\Bigl\{-\int_G f\dd\haar:f\gg 0,\, f|_{\Omega^c}\leq 0,\, f(0)=1\Bigr\}\\
&=-\sup\Bigl\{\int_G f\dd\haar:f\gg 0,\,f|_{\Omega^c}\leq 0 ,\,f(0)=1\Bigr\},
\end{align*}
so $-\alpha^\rh_\si(\Omega)= D_G(X,\Omega)$ is the Delsarte constant, as given in  Definition \ref{def:Delsarte}.  Theorem \ref{thm:mainmain} then yields
\begin{align*}
D_G(X,\Omega)&=- \sup \{ s \in \RR ~:~-\haar- s \delta_0  \in \MM_G(\Omega^c) \}\\
&=\inf \{- s \in \RR ~:~-\haar- s \delta_0  \in \MM_G(\Omega^c) \}\\
&=\inf \{s \in \RR ~:~-\haar+s \delta_0  \in \MM_G(\Omega^c) \}.
\end{align*}
\end{remark}
\begin{remark}[ Discrete groups]
If $G$ is discrete, then $X=C^{\infty,1}(G;\RR)=\ell^1(G;\RR)$ and $X'=\ell^\infty(G;\RR)$. Theorem \ref{thm:mainmain}  also yields  Theorem \ref{thm:revvir} for the case when $\Omega_-=G$ and $\Omega_+=\Omega$.
\end{remark}

Next suppose that $G$ is compact.
Then $X=C^{\infty,1}(G;\RR)=C(G;\RR)$ and $X'=M(G;\RR)$ the space of regular finite Borel measures on $G$. We thus obtain:
\begin{theorem}[$G$ compact] Let $G$ be a compact Abelian group with neutral element $0$, and let $\Omega \subseteq G$ be a set with $0\in \intt\Omega$. Consider the Banach spaces $X=C(G;\RR)$ and  $\rh,\si\in X'=M(G;\RR)$, where  $\si$ is assumed to be  strictly positive definite  and $\rh$ is assumed to be even.
Consider the linear programming ``primal'' extremal problem
\[
\alpha_\si^\rh (\Omega):= \inf \{\langle f,\rh\rangle~:~f \in \FF_G^\si(\Omega)\}.
\]
Then its linear programming dual problem is
\[
\omega_\si^\rh(\Omega^c):= \sup \{ s \in \RR ~:~\rh- s \si  \in \MM_G(\Omega^c) \}.
\]
Moreover, there is no duality gap between the two problems.
\end{theorem}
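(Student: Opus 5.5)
The plan is to specialize Theorem~\ref{thm:mainmain} (through the proof of Theorem~\ref{thm:main}) to the compact case, and then deal separately with the one new feature of the statement: $\Omega$ is not assumed symmetric. One step, the transfer of the dual constraint from $\Omega_s^c$ to $\Omega^c$, is not settled by this plan and may fail for general $\Omega$.

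\textbf{Compact identifications.} For compact $G$ the lattice is trivial: $L=\{0\}$ and $B=G$. Then $\|f\|_X=\|f\|_\infty$, so $X=C(G;\RR)$. Lemma~\ref{l:Dual-of-X} gives $X'=M(G;\RR)$, the finite regular Borel measures, because translation boundedness is just finiteness here. Propositions~\ref{prop:Mdual}, \ref{prop:Pdual} and~\ref{prop:coop} then hold verbatim. Hence, with $P=\DD\cap C(G;\RR)$ and $Q=Q_{\Omega^c}$, and for \emph{any} set $\Omega$ with $0\in\intt\Omega$,
\[
C^*=(P\cap Q)^*=-M_+(\overline{\Omega^c};\RR)+\MM+\OO .
\]
No symmetry of $\Omega$ enters here. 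Theorem~\ref{t:Ppluslemma} needs only $0\notin\overline{\Omega^c}$, which follows from $0\in\intt\Omega$. Strict positive definiteness of $\si$ and Lemma~\ref{lem:2ao} then give
\[
\alpha_\si^\rh(\Omega)=\sup\{s:\rh-s\si\in C^*\}.
\]

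\textbf{Weak duality.} Every $\mu\in\MM_G(\Omega^c)$ lies in $P^*+Q^*\subseteq C^*$. This gives $\omega_\si^\rh(\Omega^c)\le\alpha_\si^\rh(\Omega)$ for arbitrary $\Omega$.

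\textbf{Reverse inequality.} Take $s$ with $\rh-s\si\in C^*$ and write $\rh-s\si=\nu-\kappa+\mu$ with $\nu\in\MM$, $\kappa\ge0$ supported in $\overline{\Omega^c}$, and $\mu\in\OO$. Since $\rh$, $\si$ and $\nu$ are even, comparing odd parts gives $\kappa_o=\mu$, and therefore $\rh-s\si=\nu-\kappa_e$ with $\kappa_e=\tfrac12(\kappa+\kappa\circ\inv)\ge0$. For symmetric $\Omega$ the support of $\kappa_e$ stays in $\overline{\Omega^c}$ and we are finished, exactly as in Theorem~\ref{thm:main}. In general we only know $\supp\kappa_e\subseteq\overline{\Omega^c}\cup(-\overline{\Omega^c})=\overline{\Omega_s^c}$, where $\Omega_s:=\Omega\cap(-\Omega)$. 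The partial conclusion that does hold is therefore
\[
\alpha_\si^\rh(\Omega)=\alpha_\si^\rh(\Omega_s)=\omega_\si^\rh(\Omega_s^c)\ge\omega_\si^\rh(\Omega^c).
\]
The first equality holds because positive definite $f$ are even, so $f\le0$ off $\Omega$ forces $f\le0$ off $-\Omega$ as well.

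\textbf{Main obstacle.} What remains is to show that the even measure $\kappa_e$ can be replaced by a nonnegative measure supported in $\overline{\Omega^c}$ itself, at the cost of changing $\nu$ only within $\MM$. Two approaches come to mind:
\begin{itemize}
\item Absorb an $\varepsilon$-loss in $s$, using $\varepsilon\si$ with $\ft{\si}>0$ to restore positive type after moving mass.
\item Exploit that $\nu\in\MM$ is only required to be of positive type, so that the reflected part $\kappa_e|_{-\overline{\Omega^c}\setminus\overline{\Omega^c}}$ is pushed into a positive definite correction.
\end{itemize}
Neither is obviously available. An element $\nu-\kappa'$ with $\nu$ even forces the mass $\kappa'$ to be even, hence symmetrically supported. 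I expect this step to succeed only under the implicit symmetry convention used in Theorem~\ref{thm:mainmain}, or after replacing $\Omega$ by $\Omega_s$ throughout. If the literal non-symmetric version fails, the fallback is to state the dual constraint as $\MM_G(\Omega_s^c)$, which the argument above does prove.
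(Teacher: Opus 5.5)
\textbf{Verdict: there is a gap, but it lies in the statement, not in your reasoning.} Your route is the paper's. The compact theorem is obtained there simply by specializing Theorem~\ref{thm:mainmain} to $L=\{0\}$, $B=G$, $X=C(G;\RR)$, $X'=M(G;\RR)$. Your argument is complete when $\Omega=-\Omega$, and that is all the paper's proof actually covers. Theorem~\ref{thm:mainmain} assumes $\Omega$ symmetric and uses this exactly where you located it, to keep $\kappa_e$ supported in $\overline{\Omega^c}$.

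One small correction: Lemma~\ref{l:signswap} is stated for symmetric $S$, and with $B=G$ the proof of Theorem~\ref{t:Ppluslemma} applies it to $S_0=\Omega^c$. So ``no symmetry enters'' needs a line of justification. For non-symmetric $\Omega$, use $Q_{\Omega_s^c}\subseteq Q_{\Omega^c}$, which gives $P-Q_{\Omega^c}\supseteq P-Q_{\Omega_s^c}=X$.

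\textbf{The step you left open cannot be closed.} The printed statement, without symmetry of $\Omega$, is false, so neither of your proposed repairs can work. Your own remark explains why. Since $\rh-s\si$ and $\nu$ are even, every admissible $\kappa$ is even and lives on $\overline{\Omega^c}\cap(-\overline{\Omega^c})$. The stated dual therefore sees only that symmetric core, whereas the primal constraint is effectively imposed on $\Omega^c\cup(-\Omega^c)$.

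\textbf{Counterexample.} Let $G=\ZZ_3$ with counting measure $\haar$, $\si=\delta_0$ (so $\ft{\si}\equiv 1>0$), $\rh=-\haar$, $\Omega=\{0,1\}$ and $\Omega^c=\{2\}$. List values at $0,1,2$.
\begin{itemize}
\item \emph{Primal.} An admissible $f$ is even, so $f=(1,a,a)$ with $a\le 0$. The condition $f\gg0$ means $1+2a\ge 0$ and $1-a\ge0$. Hence $\alpha_\si^\rh(\Omega)=\inf\{-(1+2a):-\tfrac12\le a\le 0\}=-1$.
\item \emph{Dual as printed.} Here $\nu=\rh-s\si+c\,\delta_2=(-1-s,-1,-1+c)$ must be even, which forces $c=0$. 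Then $\nu\gg0$ means $-3-s\ge0$ and $-s\ge 0$. Hence $\omega_\si^\rh(\Omega^c)=-3<-1$.
\item \emph{Consistency check.} For $\Omega_s=\{0\}$, the choice $\kappa=\delta_1+\delta_2$, $\nu=0$, $s=-1$ recovers the value $-1$, matching your partial result.
\end{itemize}

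\textbf{What to do.} Replace your ``main obstacle'' paragraph with this counterexample, and state the theorem in one of these forms:
\begin{itemize}
\item with $\Omega$ symmetric, as in Definition~\ref{def:Delsarte} and Theorem~\ref{thm:mainmain};
\item for general $\Omega$, with the dual constraint $\MM_G(\Omega_s^c)$ (your fallback);
\item for general $\Omega$, with the constraint $\rh-s\si\in -M_{+}(\overline{\Omega^c};\RR)+\MM+\OO$, which your computation of $C^*$ yields directly.
\end{itemize}
In any of these forms your argument is a complete proof.
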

This is partly a generalization of the result in \cite{UFO} in the extent that here we allow more general weights $\rh$ and more general normalizing functionals $\si$. On the other hand, \cite{UFO} is from a certain viewpoint far more general as it handles two-sided sign restrictions on the primal problem, similarly to the one in the discrete case in Section \ref{sec:discrete} here. Moreover, the results there cover the case of compact Gelfand pairs.

\section*{Acknowledgements}%

This research was partially supported by the DAAD-Tempus PPP Grant 57448965 ``Harmonic Analysis and Extremal Problems''.

Elena E. Berdysheva was supported  in part by the University of Cape Town's Research Committee (URC).

Elena E. Berdysheva and Mita D. Ramabulana thank the HUN-REN R\'enyi Institute of Mathematics for hospitality during their respective visits.


Marcell Ga\'al was supported by the National Research, Development and Innovation Office -- NKFIH Reg. No.'s K-115383 and K-128972, and also by the Ministry for Innovation and Technology, Hungary throughout Grant TUDFO/47138-1/2019-ITM.

Mita D. Ramabulana was supported by the Carnegie DEAL 3 Postdoctoral Fellowship.

Szil\'ard Gy. Révész was supported in part by the Hungarian National Research, Development and Innovation Fund projects \# K-119528, K-132097, K-146387, K-147153 and Excellence No.
151341.
%

\end{document}